\newcommand{\what}{\widehat}%
\newcommand{\R}{\mathbb R}%
\newcommand{\C}{\mathbb C}%
\newcommand{\Z}{\mathbb Z}%
\newcommand{\N}{\mathbb N}%
\newcommand{\Sph}{\mathbb S}%
\newcommand{\B}{\mathbb B}%
\newcommand{\hc}{\mathrm c}
\newcommand{\A}{\mathbb A}
\newtheorem{theorem}{Theorem}[section]
\newtheorem{lemma}[theorem]{Lemma}
\newtheorem{proposition}[theorem]{Proposition}
\theoremstyle{definition}
\theoremstyle{definition}
\newtheorem{remark}[theorem]{Remark}
\newtheorem*{remark*}{Remark}
\numberwithin{equation}{subsection}
\numberwithin{theorem}{subsection}
\begin{document}
\baselineskip18pt
\author[M. Naik]{Muna Naik}
\address[M.  Naik]{
Harish-Chandra Research Institute, Chhatnag Road, Jhunsi, Prayagraj 211 019,
India }
\email{mnaik41@gmail.com, munanaik@hri.res.in}

\author[R. P. Sarkar]{Rudra P. Sarkar}
\address[R. P. Sarkar]{Stat-Math Unit, Indian Statistical
Institute, 203 B. T. Rd., Calcutta 700108, India}\email{rudra.sarkar@gmail.com, rudra@isical.ac.in}
\title[Asymptotic mean value property]{Asymptotic mean value property for eigenfunctions of the Laplace--Beltrami operator on  Damek--Ricci spaces}
\subjclass[2010]{Primary 43A85; Secondary 22E30}
\keywords{eigenfunction of Laplacian, Damek--Ricci
space, mean value property}

\begin{abstract}
Let $S$ be a Damek--Ricci space equipped with the Laplace--Beltrami operator $\Delta$. In this paper we characterize all eigenfunctions  of  $\Delta $  through sphere, ball and shell averages as the radius (of sphere, ball or shell) tends to infinity. 
\end{abstract}
\maketitle

\section{introduction}
Let $S$ be a  Damek--Ricci space,  equipped with a distance $d$ and the Laplace--Beltrami operator $\Delta$ induced by its Riemannian structure. We recall that these are nonsymmetric generalizations of rank one Riemannain symmetric spaces  of noncompact type which are also solvable Lie groups.  They appeared as counterexamples to the Lichnerowicz conjecture in the noncompact case . The rank one Riemannian symmetric spaces of noncompact type form a thin subclass inside the set of Damek--Ricci spaces \cite{ADY, Damek--Ricci}. We fix the identity element $e$ of the group $S$ as the base point. We call a function $f$ on $S$ to be radial if the value of $f$ at $x \in S$ depends only on $|x|:=d(e, x)$. Thus a radial function descends naturally to a function on the nonnegative real numbers. We shall  often regard a radial function $f$ on $S$ as a function on the nonnegative real numbers, as $f(x)=f(d(e, x))$. For $\lambda\in \C$, the elementary spherical function $\varphi_\lambda$ is the unique smooth radial eigenfunction of $\Delta$ with eigenvalue $-(\lambda^2+\rho^2)$ satisfying $\varphi_\lambda(e)=1$ where $\rho$ is half of the  limit of the mean curvature of geodesic sphere as radius of the sphere tends to $\infty$. If $S$ is a rank one symmetric space, then $\rho$ coincides with  the {\em half sum of positive roots} counted with multiplicities. It follows from the definition  that $\varphi_\lambda=\varphi_{-\lambda}$ and $\varphi_{i\rho}\equiv 1$.

Purpose of this article is to establish  a characterization of eigenfunctions of the Laplace--Beltrami operator $\Delta$ through the asymptotic behaviour of the radial averages of a continuous function on $S$. We shall consider three  primary  radial averages, namely the sphere, the ball and the annular averages. To state our results and for further discussions   we need to establish a few notation. 
    
Let $\Sph(x,r)$ and $\B(x, r)$ denote respectively  the geodesic sphere and ball of radius $r > 0$ centered at $x \in S$. For $0< r<r'$, let $\A_{r,r'}(x)$ denote the annulus or shell centered at $x \in S$ with inner radius $r$ and outer radius $r'$. The volume of $\B(e, r)$ and $\A_{r,r'}(e)$ are denoted respectively by $V_r$ and $V_{r,r'}$. Let $\sigma_r$ be the normalized surface measure of $\Sph(e, r)$. For convenience we shall also use the  notation 
\[m_r=  \frac{\chi_{\B(e, r)}}{V_r} \text{ and } a_{r, r'}=\frac{\chi_{A_{r,r'}}}{V_{r,r'}}\] where $\chi_A$ is the indicator function of a set $A$.    
Using these notation we write the sphere, ball and annular averages of a  continuous function $f$ on $S$ respectively as  
\[\mathscr M_r f(x) := f \ast \sigma_r(x)= \int_{\Sph(e, r)} f(xy) \,d\sigma_r(y),\] 
\[\mathscr B_r f(x) := f \ast m_r(x)= \frac 1{V_r}\int_{\B(x, r)} f,\]
\[\mathscr A_{r, r'} f(x) := f \ast a_{r, r'}(x)= \frac 1{V_{r, r'}}\int_{\A_{r, r'}(x)} f,\]
where $\ast$ denotes the convolution of the group $S$.

The generalized (sphere) mean value property  states (\cite{Helgason-2, Helgason-3}) that a continuous function $f$ on $S$ is an eigenfunction   of $\Delta$ with eigenvalue $-(\lambda^2+\rho^2)$ for some $\lambda\in \C$, if and only if
\begin{equation} \label{MVP-sphere} f\ast \sigma_r =  \varphi_\lambda(r) f \text{ for all }r>0.
\end{equation} (See also Proposition \ref{character-eigen}.) Above, $\varphi_\lambda(x)$, being a radial function on $S$, is interpreted as a function on nonegative real numbers, as mentioned above. Therefore such an eigenfunction $f$ satisfies the ball mean value property:
\begin{equation} \label{MVP-ball}f\ast \chi_{\B(e, r)} = \left(\int_{\B(e, r)}\varphi_\lambda(x) \,dx \right)\, f, \text{ for all } r>0.
\end{equation}
From \eqref{MVP-ball} it also follows that $f$ satisfies the annular mean value property
\begin{equation} \label{MVP-annulus}f\ast \chi_{\A_{r,r'}(e)} = \left(\int_{\A_{r,r'}(e)}\varphi_\lambda(x) \,dx \right)\, f, \text{ for all } 0<r<r'.
\end{equation}
Taking $\lambda=i\rho$, we get back the standard mean value properties satisfied by the harmonic functions in all the three cases above. To simplify the statements of our main results we further introduce the following notation.

For $\lambda\in \C$ and $r'>r>0$, let
\[\sigma_r^\lambda:= \varphi_\lambda(r)^{-1}\sigma_r,\]
 \[ V_r^\lambda:=\int_{\B(e, r)} \varphi_\lambda(x)\,dx, \ \ m_r^\lambda:=(V_r^\lambda)^{-1} \chi_{\B(e, r)},\]
 \[ V^\lambda_{r,r'}:=  \int_{\A_{r,r'}(e)} \varphi_\lambda(x ) \ dx  =V^\lambda_{r'}-V^\lambda_{r} \text{ and } a^\lambda_{r, r'}= (V_{r, r'}^\lambda)^{-1} \chi_{\A_{r, r'}(e)}.\]
 In these notation  \eqref{MVP-sphere},  \eqref{MVP-ball} and 
\eqref{MVP-annulus} can be rewritten as 
 \[f\ast \sigma^\lambda_r(x)=f(x),  f\ast m^\lambda_r(x)= f(x) \text{ and } f\ast a^\lambda_{r, r'}(x)= f(x),\] whenever
 $\varphi_\lambda(r)\neq 0$ respectively   $V_r^\lambda\neq 0$, $V_{r, r'}^\lambda \ne 0$. 

The three results which we intend to prove in this paper are the following.
 \begin{theorem}
 \label{weth5} Let  $f$ and $g$ be two  continuous functions  on $S$.
If for a fixed $\lambda\in \C$, 
\[ f \ast \sigma_r^\lambda(x) \to g(x) \] for every $x\in S$,
uniformly on compact sets  as $r \to \infty$ through $\{r >0\mid \varphi_\lambda(r) \ne 0 \}$, then $\Delta g=-(\lambda^2+\rho^2) g$.
\end{theorem}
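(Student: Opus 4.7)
The plan is to reduce the theorem to the mean value characterization of eigenfunctions encapsulated in the forthcoming Proposition~\ref{character-eigen}: a continuous $g$ on $S$ satisfies $\Delta g = -(\lambda^2+\rho^2)g$ if and only if $g \ast \sigma_s^\lambda = g$ for every $s>0$ with $\varphi_\lambda(s) \neq 0$. Since $g$ is continuous as a uniform-on-compacta limit of continuous functions, the entire burden is to verify this mean-value identity. Fix such an $s$ and $x \in S$. Compactness of $\Sph(e,s)$ and uniform-on-compacta convergence allow interchange of limit and integral to yield
\[
g \ast \sigma_s^\lambda(x) = \lim_{r \to \infty,\, \varphi_\lambda(r) \ne 0} (f \ast \sigma_r^\lambda) \ast \sigma_s^\lambda(x).
\]
The Damek--Ricci structure ensures that the algebra of radial measures on $S$ is commutative under convolution, so $(f \ast \sigma_r^\lambda) \ast \sigma_s^\lambda = (f \ast \sigma_s^\lambda) \ast \sigma_r^\lambda$.

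Sending $r \to \infty$ naively in the commuted expression only recovers $g \ast \sigma_s^\lambda$ on both sides, a tautology. To extract genuinely new information I would bring in the Darboux equation $\Delta_x(f \ast \sigma_r)(x) = \mathcal L_r(f \ast \sigma_r)(x)$, where $\mathcal L_r$ is the radial part of $\Delta$ acting on the $r$-variable. Combined with $\mathcal L_r \varphi_\lambda = -(\lambda^2+\rho^2)\varphi_\lambda$ and the product rule applied to $f \ast \sigma_r^\lambda = \varphi_\lambda(r)^{-1}(f \ast \sigma_r)$, this produces, whenever $\varphi_\lambda(r) \neq 0$,
\[
(\Delta_x + \lambda^2+\rho^2)(f \ast \sigma_r^\lambda)(x) = \mathcal L_r(f \ast \sigma_r^\lambda)(x) + 2\,\frac{\varphi_\lambda'(r)}{\varphi_\lambda(r)}\,\partial_r(f \ast \sigma_r^\lambda)(x).
\]
Testing against an arbitrary $\phi \in C_c^\infty(S)$ and using self-adjointness of $\Delta$ to transport the differential operator onto $\phi$, the left-hand side integrates to $\int (f \ast \sigma_r^\lambda)(\Delta + \lambda^2+\rho^2)\phi$, which tends to $\int g\,(\Delta + \lambda^2+\rho^2)\phi$ as $r \to \infty$ by the uniform-on-compacta hypothesis and the compact support of $\phi$. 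If the right-hand side tends to zero in the limit, then $(\Delta + \lambda^2+\rho^2)g = 0$ distributionally, and elliptic regularity upgrades this to the classical identity.

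The main obstacle is controlling the right-hand side. The hypothesis supplies convergence of the scalar $G(r) := \int \phi(x)(f \ast \sigma_r^\lambda)(x)\,dx$ but not of its derivatives $G'(r)$ or $G''(r)$, so the vanishing of $\mathcal L_r G(r) + 2(\varphi_\lambda'(r)/\varphi_\lambda(r))G'(r)$ in the limit is not automatic. I would try to handle this by multiplying through by the integrating factor $\varphi_\lambda(r)^2$, recasting the right-hand side as $\partial_r[\varphi_\lambda(r)^2 G'(r)]$ plus lower-order terms, and then integrating in $r$ over an interval $[r_0, R]$: the resulting boundary terms should be controllable using the known asymptotics of $\varphi_\lambda$ together with the cofinal-in-$r$ hypothesis. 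An alternative route is to exploit the explicit Damek--Ricci product formula for $\sigma_r \ast \sigma_s$ to identify $\sigma_r^\lambda \ast \sigma_s^\lambda$ asymptotically with $\sigma_r^\lambda$ in a sense that forces $g \ast \sigma_s^\lambda = g$ directly from the commutator identity. Either way, the analytic core is extracting a differential consequence from a purely asymptotic hypothesis on the scalar values $f \ast \sigma_r^\lambda(x)$.
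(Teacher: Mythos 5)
You correctly reduce the problem to establishing $g \ast \sigma_s = \varphi_\lambda(s)\, g$ for (small) $s$ and then invoking Proposition~\ref{character-eigen}, but the analytic core of your argument is missing. The route you actually pursue --- the Darboux equation, testing against $\phi \in C_c^\infty(S)$, and an integrating factor in $r$ --- has a gap you yourself flag and do not close: the hypothesis controls only the scalar $G(r) = \int \phi\, (f \ast \sigma_r^\lambda)$ and gives no information on $G'(r)$ or $G''(r)$, and after multiplying by $\varphi_\lambda(r)^2$ and integrating over $[r_0,R]$ you are left with boundary terms $\varphi_\lambda(R)^2 G'(R) - \varphi_\lambda(r_0)^2 G'(r_0)$ that cannot be estimated from convergence of $G$ alone. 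Moreover, the step you discard as a ``tautology'' is precisely where the paper's proof lives, and it is not tautological. By Lemma~\ref{propo1},
\begin{equation*}
f \ast \sigma_r \ast \sigma_s(x) = \int_{\Sph(e,s)} f \ast \sigma_{d(a_r,y)}(x)\, d\sigma_s(y),
\qquad
\int_{\Sph(e,s)} \varphi_\lambda(d(a_r,y))\, d\sigma_s(y) = \varphi_\lambda(r)\varphi_\lambda(s),
\end{equation*}
so $\varphi_\lambda(r)^{-1} f \ast \sigma_r \ast \sigma_s(x) - \varphi_\lambda(s) g(x)$ is a superposition, with weights $\varphi_\lambda(d(a_r,y))/\varphi_\lambda(r)$ of total mass $\varphi_\lambda(s)$, of the errors $\varphi_\lambda(d(a_r,y))^{-1} f \ast \sigma_{d(a_r,y)}(x) - g(x)$. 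Since the intermediate radii $d(a_r,y) \in [r-s, r+s]$ all tend to infinity, the hypothesis (applied at those radii, not at $r$) kills these errors, giving the limit $\varphi_\lambda(s) g(x)$; computing the same limit directly from uniform convergence on the compact sphere $\Sph(x,s)$ gives $g \ast \sigma_s(x)$. Equating the two yields the mean value identity with no differentiation in $r$ whatsoever.

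A second omission: for $\lambda \in \R^\times$ the zeros of $r \mapsto \varphi_\lambda(r)$ form an unbounded set, so both the normalizing factor $\varphi_\lambda(r)^{-1}$ and the weights $\varphi_\lambda(d(a_r,y))/\varphi_\lambda(r)$ at intermediate radii can blow up, vanish, or change sign. The paper handles this by using the Harish-Chandra expansion $\varphi_\lambda(t) = e^{-\rho t}[C_\lambda \cos(\lambda t + \theta_\lambda) + E(\lambda,t)]$ to produce a sequence $t_n \uparrow \infty$ and a $\delta>0$ with $\varphi_\lambda > 0$ on the annuli $t_n - \delta \le |y| \le t_n + \delta$, and then runs the argument only for $0 < s \le \delta$, which is exactly what Proposition~\ref{character-eigen} requires. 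Your proposal does not engage with this case at all, and any completed proof must.
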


\begin{theorem} \label{weth1}   Let  $f$ and $g$ be two  continuous functions  on $S$. If for a fixed $\lambda\in \C$,
\[f\ast m_r^\lambda(x) \to g(x)\]  for every $x\in S$, uniformly on compact sets, as $r\to \infty$ through $\{r>0 \mid V_r^\lambda \ne 0\}$,  then $\Delta g=-(\lambda^2+\rho^2)g$.
 \end{theorem}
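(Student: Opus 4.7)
The plan is to verify that $g$ satisfies the exact ball mean value property $g\ast m_R^\lambda = g$ for every $R$ in the set $\{R>0 : V_R^\lambda\neq 0\}$. Since $\varphi_\lambda(0)=1$, continuity of $R\mapsto V_R^\lambda$ ensures this set contains an interval $(0,\varepsilon)$, so the converse direction of \eqref{MVP-ball} (equivalently, the characterization from Proposition~\ref{character-eigen}) will then yield $\Delta g=-(\lambda^2+\rho^2)g$.

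The first step exploits a foundational fact about Damek--Ricci spaces: the convolution algebra of radial functions on $S$ is commutative. Together with associativity, and writing $u_r:=f\ast m_r^\lambda$, this gives for each admissible $R$
\[
u_r\ast m_R^\lambda \;=\; \bigl(f\ast m_R^\lambda\bigr)\ast m_r^\lambda.
\]
Because $m_R^\lambda$ is compactly supported on $\B(e,R)$, uniform-on-compacts convergence $u_r\to g$ transfers through right convolution with $m_R^\lambda$, so the left-hand side converges uniformly on compacts to $g\ast m_R^\lambda$. Hence
\[
g\ast m_R^\lambda(x) \;=\; \lim_{r\to\infty}\bigl(f\ast m_R^\lambda\bigr)\ast m_r^\lambda(x).
\]

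The central technical step, and the main obstacle, is to identify this limit with $g(x)$. Setting $h:=f\ast m_R^\lambda - f$, the task reduces to showing $h\ast m_r^\lambda\to 0$ uniformly on compacts. The reason to hope this is true lies in the spectral picture: convolution by $m_R^\lambda$ acts on the spherical function $\varphi_\mu$ as multiplication by $V_R^\mu/V_R^\lambda$, which equals $1$ at $\mu=\lambda$, so $h$ has vanishing $\lambda$-spectral content. To make this rigorous I would use the polar-coordinates representation
\[
V_r^\lambda\,(h\ast m_r^\lambda)(x) \;=\; \int_0^r A(s)\,\varphi_\lambda(s)\,(h\ast \sigma_s^\lambda)(x)\,ds,
\]
where $A(s)$ is the area of $\Sph(e,s)$, rewrite $h\ast\sigma_s^\lambda$ via commutativity as the ``$R$-defect'' of $f\ast \sigma_s^\lambda$, and invoke the Harish--Chandra $\mathbf c$-function asymptotic expansion of $\varphi_\lambda$ on $S$. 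The latter controls both the growth of $V_r^\lambda$ and the oscillatory behaviour of the integrand; an argument of Riemann--Lebesgue/Tauberian type should then force the ratio to zero. This asymptotic/spectral input is the heart of the matter; without it, the manipulations above collapse into a tautology ($g\ast m_R^\lambda = g\ast m_R^\lambda$), so this is exactly the step at which the Damek--Ricci structure must enter nontrivially.

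Once $g\ast m_R^\lambda = g$ is established for all admissible $R$, the ball mean value characterization \eqref{MVP-ball} immediately delivers $\Delta g=-(\lambda^2+\rho^2)g$, completing the proof.
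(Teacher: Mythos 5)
The skeleton of your argument is circular at exactly the point you yourself flag as ``the heart of the matter,'' and the sketched repair does not work. By the commutativity and associativity of radial convolution that you invoke, your function $h=f\ast m_R^\lambda-f$ satisfies
\[
h\ast m_r^\lambda=(f\ast m_r^\lambda)\ast m_R^\lambda-f\ast m_r^\lambda\;\longrightarrow\; g\ast m_R^\lambda-g
\]
uniformly on compacts, so the claim ``$h\ast m_r^\lambda\to 0$'' is not a lemma on the way to ``$g\ast m_R^\lambda=g$'': it \emph{is} that statement, verbatim, and nothing in the proposal supplies an independent proof of it. The spectral heuristic ($h$ has vanishing $\lambda$-content under $\what{\;\cdot\;}$) is only a heuristic, and the concrete route you propose --- writing $V_r^\lambda\,(h\ast m_r^\lambda)(x)=\int_0^r J(s)\varphi_\lambda(s)\,(h\ast\sigma_s^\lambda)(x)\,ds$ and invoking a Riemann--Lebesgue/Tauberian argument --- requires asymptotic control of the normalized \emph{sphere} averages $h\ast\sigma_s^\lambda$, about which the hypothesis (which concerns only ball averages) says nothing: an integral average can converge while the integrand oscillates or diverges. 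A second, independent obstruction is the case $\lambda\in\R^\times$: the zeros of $r\mapsto V_r^\lambda$ form an unbounded set (Remark \ref{remark-zeros}), so $1/V_r^\lambda$ blows up along radii arbitrarily far out, and no blanket statement ``$h\ast m_r^\lambda\to 0$ as $r\to\infty$ through $\{V_r^\lambda\ne 0\}$'' can be extracted from size estimates alone; some mechanism for selecting good radii is unavoidable.

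For comparison, the paper never attempts to prove the ball mean value property for $g$. It radializes the left translates of $f$, so that the hypothesis at the single point $e$ becomes a statement about the one-dimensional averages $\frac{1}{V_r^\lambda}\int_0^r f(t)J(t)\,dt$; it selects a sequence $r_n\uparrow\infty$ along which $|V_t^\lambda|/|V_s^\lambda|$ stays bounded for $t,s$ near $r_n$ (Lemma \ref{lem-ball-1}); and it uses geodesic convexity of the distance function (Proposition \ref{lemma-convex}) to dominate the ball average at a nearby point $x$ by such one-dimensional averages over perturbed radii $t_w\in[r_n-|x|,r_n+|x|]$ (Lemma \ref{lem-ball-2}). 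This yields $\mathscr M_{|y|}g(x)=\varphi_\lambda(|y|)\,g(x)$ for $y$ in a small neighbourhood of $e$, and Proposition \ref{character-eigen} concludes. Note also that your final step --- a converse of \eqref{MVP-ball} for small $R$ --- is itself unproved as stated (Proposition \ref{character-eigen} is a characterization via sphere averages, not ball averages), though that could be repaired by differentiating in $R$; the essential gap is the circular central step.
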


\begin{theorem}
 \label{annth1}
Let  $f$ and $g$ be two  continuous functions  on $S$. If for a fixed $\lambda\in \C$  and two fixed positive numbers $d,\delta$, \[ f \ast a_{r,r'}^\lambda(x)  \to g(x) \] for every $x\in S$, uniformly on compact sets as $(r', r) \to \infty$ through  
\[\{(r',r)\in \R^+ \times \R^+ \mid d < r'-r < d+\delta, V_{r,r'}^\lambda\neq 0\},\]
then $\Delta g = -(\lambda^2+\rho^2)g$.
\end{theorem}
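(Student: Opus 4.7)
My plan is to reduce Theorem~\ref{annth1} to the sphere-average version, Theorem~\ref{weth5}, by showing that the annular-average hypothesis forces the sphere $\lambda$-averages to converge pointwise as the radius tends to infinity. Writing $A(s)$ for the surface area of the geodesic sphere of radius $s$ on $S$, polar coordinates give
\[
f \ast a_{r,r'}^\lambda(x) \;=\; \frac{1}{V_{r,r'}^\lambda}\int_r^{r'} A(s)\,\varphi_\lambda(s)\,(f \ast \sigma_s^\lambda)(x)\,ds, \qquad V_{r,r'}^\lambda = \int_r^{r'} A(s)\,\varphi_\lambda(s)\,ds.
\]
Thus the hypothesis is precisely that a normalized sliding-window average of $(f \ast \sigma_s^\lambda)(x)$, taken over windows $(r, r')$ of widths $r' - r \in (d, d+\delta)$, converges to $g(x)$ uniformly on compacts in $x$.

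The main step is a Tauberian-type extraction: to show that this sliding-window convergence, taken over a whole interval of admissible window widths, implies pointwise convergence $(f \ast \sigma_s^\lambda)(x) \to g(x)$, uniformly on compacts in $x$, as $s \to \infty$ through $\{s > 0 : \varphi_\lambda(s) \ne 0\}$. The essential leverage is the freedom to vary the width inside the interval $(d, d+\delta)$: subtracting the identities for two widths $d_1 \ne d_2$ in that interval yields
\[
V_{r,r+d_2}^\lambda (f \ast a_{r,r+d_2}^\lambda)(x) - V_{r,r+d_1}^\lambda (f \ast a_{r,r+d_1}^\lambda)(x) \;=\; \int_{r+d_1}^{r+d_2} A(s)\,\varphi_\lambda(s)\,(f \ast \sigma_s^\lambda)(x)\,ds,
\]
which replaces the window by the shorter subinterval $(r+d_1, r+d_2)$. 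Iterating this mechanism, and combining with sharp two-sided asymptotics of the weight density $A(s)\varphi_\lambda(s)$ (which follow from $A(s) \asymp e^{2\rho s}$ on $S$ and the Harish-Chandra series expansion of $\varphi_\lambda$), one concentrates the integration near any target point and pins down the pointwise limit. Once pointwise convergence of the sphere $\lambda$-averages is established, Theorem~\ref{weth5} applied to $f$ gives $\Delta g = -(\lambda^2+\rho^2)g$ directly.

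The principal obstacle is the Tauberian step itself. Care is needed when $\lambda$ is real and nonzero, since then $\varphi_\lambda$ oscillates and changes sign on $\R^+$; the admissible set $\{s : \varphi_\lambda(s) \neq 0\}$ is open but may have complicated structure, and the hypothesis' nondegeneracy condition $V_{r,r'}^\lambda \ne 0$ must be preserved throughout the argument. Sharp lower bounds on $|V_{r,r'}^\lambda|$ for $r' - r$ bounded and $r \to \infty$ will therefore be essential, and will have to be extracted from the explicit Damek--Ricci formula for $A(s)$ together with the known asymptotics of $\varphi_\lambda$.
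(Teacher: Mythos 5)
Your reduction hinges on a Tauberian step that is neither proved nor, as formulated, provable. You claim that convergence of the weighted sliding-window averages of $F(s)=(f\ast\sigma_s^\lambda)(x)$ over all windows of width in $(d,d+\delta)$ forces $F(s)\to g(x)$, and that the subtraction identity ``concentrates the integration near any target point.'' But the subtraction trick only produces windows of width $\eta=d_2-d_1>0$, and the error it transfers is $o\bigl(|V^\lambda_{r,r+d_2}|\bigr)$ measured against the much smaller $|V^\lambda_{r+d_1,r+d_2}|$; the ratio $|V^\lambda_{r,r+d_2}|/|V^\lambda_{r+d_1,r+d_2}|$ blows up as $\eta\to 0$, so you cannot pass to the pointwise limit. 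As a statement about general continuous integrands the step is false: with weight $e^{2\rho s}$ the function $F(s)=\sin(s^2)$ has window averages tending to $0$ for every fixed window width (integrate by parts), yet no pointwise limit. So you would need to exploit special structure of $s\mapsto \mathscr M_sf(x)$ that a merely continuous $f$ does not visibly provide. A second obstruction: for $\lambda\in\R^\times$ the weight $A(s)\varphi_\lambda(s)$ vanishes at the (unbounded set of) zeros of $\varphi_\lambda$, so the annular hypothesis carries no information about $f\ast\sigma_s^\lambda(x)$ near those radii; at best you could hope for convergence through a subset of radii, whereas Theorem \ref{weth5} requires the limit through all large admissible $r$, and Section 6(2) of the paper shows that convergence along a sequence is genuinely insufficient for its conclusion. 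Your opening identity and the attention to lower bounds on $|V^\lambda_{r,r'}|$ are sound, but the bridge to Theorem \ref{weth5} is the whole difficulty and it is missing.

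For contrast, the paper never recovers sphere averages at large radii. It establishes the \emph{local} mean value property $\mathscr M_tg(x)=\varphi_\lambda(t)g(x)$ for small $t$ and concludes via Proposition \ref{character-eigen}. Concretely, Lemma \ref{alpro1} produces sequences $r_j,r'_j\to\infty$ and a $\delta'>0$ along which the perturbed annular volumes $V^\lambda_{s,t}$, for $s,t$ within $\delta'$ of $r_j,r'_j$, are nonzero and uniformly comparable to $V^\lambda_{r_j,r'_j}$; Lemma \ref{basic-lemma-3} then uses geodesic convexity of the distance function (Proposition \ref{lemma-convex}) to compare the annular average at a point $y$ near $e$ with annular averages at $e$ over slightly perturbed annuli, yielding $f\ast a^\lambda_{r_j,r'_j}(y)\to L\varphi_\lambda(y)$ for radial $f$; applying this to the radialization of the translates $\ell_xf$ gives the local property. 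You may want to redirect your effort toward an argument of that type rather than toward the global Tauberian extraction.
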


We recall that  for $\lambda \in i \R$,  $\varphi_\lambda$ is a strictly positive function on $S$.   Hence for such $\lambda$, both $V_r^\lambda$ and $V_{r,r'}^\lambda$ are positive quantities.  For  $\lambda\in \C$ with $\Im \lambda \neq 0$, $\varphi_\lambda(r)$, $V^\lambda_r$ and  $V_{r, r'}^\lambda$ are nonzero  for all sufficiently large $r$ (see Section \ref{prelim} for details). As we are interested in the limit as $r\to \infty$ and $r'\to \infty$, the sets 
$\{r\mid \varphi_\lambda(r)=0\}, \{r\mid V_r^\lambda = 0\},\{(r', r)\mid V_{r,r'}^\lambda=0\}$ for such $\lambda$ do not concern us. The situation however is more delicate when $\lambda$ is a nonzero real number. Since the functions $r\mapsto \varphi_\lambda(r)$ and $r\mapsto V^\lambda_r$ are real analytic, the sets \[\{r >0 \mid \varphi_\lambda(r) =  0\} \text { and }\{r > 0 \mid V_r^\lambda = 0 \}\] are countable. 
  Similarly, the function
$(r', r)\mapsto V^\lambda_{r, r'}$ is also real analytic on the strip  
$d<r'-r<d+\delta$ on the first quadrant of the $(x, y)$-plane, hence their zeros will form a lower dimensional  subset. But for nonzero real $\lambda$, these three sets of zeros are unbounded. See Remark \ref{remark-zeros} for details.       Thus the radii  need to approach infinity avoiding these sets and that adds additional difficulties to the proof.

\subsection{Background and motivation} 
The characterization of harmonic functions through asymptotic behaviour of sphere or ball averages of a function  as the radius goes to zero is classical (see \cite{Blas1, Blas2, Bre}), and extendeds to all Riemannian manifolds (see \cite[Theorem 6.11.1]{Will}). On the other hand,  the asymptotic behaviour of these averages as the radius tends to infinity, does not seem to be very well known.

A text book proof of the fact that bounded harmonic functions on $\R^n$ are constant is the following, where in $\R^n$ we reuse the notation developed above for $S$. We have for $f\in L^\infty(\R^n)$,

\[|\mathscr B_rf(0)-\mathscr B_rf(x)| \le C \|f\|_\infty |\A_{r-|x|, r+|x|}|/V_r \to 0, \text{ as } r\to \infty.\] We observe that this proof actually shows that if for  $f\in L^\infty(\R^n)$ and a measurable function $g$ on $\R^n$, $\mathscr B_rf(x)\to g(x)$ as $r\to \infty$, pointwise for almost every  $x\in \R^n$, then  $g$ is constant, hence harmonic. 
We note the following points about this proof.  Firstly the proof crucially depends on the polynomial growth of the ball in $\R^n$, which precludes its extension to $S$. Secondly, as the proof is designed  to show that the limit function $g$ is constant, it is not robust enough to generalize to a version for eigenfunctions with arbitrary eigenvalues, even in $\R^n$. On the other hand it reminds us that in $S$ there are nonconstant harmonic functions which are bounded, a sharp distinction with the Euclidean spaces. Lastly, a growth condition such as `$f$ is bounded' is necessary for this proof.
In search of a result which considers functions without any growth condition we find a paper by  Plancherel and P\'olya (1931) \cite{PP} on $\R^2$. Among other things, this was generalized to $\R^n$  by Benyamini and Weit (1989) \cite{BW1}. We state here a simplified and combined version of this result:
\begin{theorem} \label{PP-BW}
If for  continuous functions $f, g$ on $\R^n$
\[\lim_{r\to \infty} \mathscr B_r f= g\]  uniformly on compact sets, then $g$ is a  harmonic function.
\end{theorem}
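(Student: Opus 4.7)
The plan is to show that the limit function $g$ satisfies the ball mean value property $\mathscr{B}_s g(x) = g(x)$ for every $x \in \R^n$ and every $s > 0$; by the classical converse of the mean value theorem, this forces $g$ to be harmonic. Continuity of $g$ is immediate from the uniform-on-compacts convergence.

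Fix $s > 0$ and $x \in \R^n$. Uniform convergence of $\mathscr{B}_r f \to g$ on the compact ball $\overline{\B(x, s)}$, combined with the commutativity of convolution on $\R^n$ and Fubini's theorem, yields
\[
\mathscr{B}_s g(x) - g(x) \;=\; \lim_{r \to \infty} \frac{1}{V_r} \int_{\R^n} f(y)\, \kappa_r(y)\, dy, \qquad \kappa_r(y) := \frac{|\B(x, s) \cap \B(y, r)|}{V_s} - \chi_{\B(x, r)}(y).
\]
The kernel $\kappa_r$ is radial about $x$, supported in the thin shell $\{r - s \le |y - x| \le r + s\}$, vanishes on the two boundary spheres of this shell, and has $\int_{\R^n} \kappa_r = V_r - V_r = 0$ by translation invariance of Lebesgue measure.

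Next, pass to the radial variable $\rho = |y - x|$ and use the identity $\frac{d}{d\rho}[V_\rho\, \mathscr{B}_\rho f(x)] = \int_{\partial \B(x, \rho)} f\, d\sigma$ to rewrite the shell integral as a Stieltjes integral against $d[V_\rho \mathscr{B}_\rho f(x)]$ on $[r-s, r+s]$. Integration by parts (the boundary terms vanish since $\wtilde\kappa_r(r \pm s) = 0$, where $\wtilde\kappa_r$ denotes the radial profile of $\kappa_r$) gives
\[
\frac{1}{V_r}\int_{\R^n} f\, \kappa_r \;=\; -\frac{1}{V_r}\int_{r-s}^{r+s} V_\rho\, \mathscr{B}_\rho f(x)\, d\wtilde\kappa_r(\rho).
\]
Writing $\mathscr{B}_\rho f(x) = g(x) + [\mathscr{B}_\rho f(x) - g(x)]$, the leading $g(x)$-contribution $-g(x) V_r^{-1} \int V_\rho\, d\wtilde\kappa_r$ vanishes: a second integration by parts converts $\int V_\rho\, d\wtilde\kappa_r$ into a multiple of $\int_{r-s}^{r+s} \wtilde\kappa_r(\rho)\,\rho^{n-1}\, d\rho$, and this vanishes exactly by the zero-integral property of $\kappa_r$. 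The residual contribution is bounded by $V_r^{-1}\cdot V_{r+s}\cdot \sup_{\rho \in [r-s, r+s]}|\mathscr{B}_\rho f(x) - g(x)|\cdot \mathrm{TV}(\wtilde\kappa_r)$; direct inspection shows $\mathrm{TV}(\wtilde\kappa_r) \le 2$ uniformly in $r$, and the supremum tends to zero by hypothesis, so the residual is $o(1)$. Hence $\mathscr{B}_s g(x) = g(x)$, completing the argument.

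The main obstacle is producing the exact cancellation of the leading $O(V_r)$ contributions in the shell integral. Since $f$ is given with no growth control, one cannot pass absolute values inside the integral or bound crudely via $\|\kappa_r\|_\infty \cdot \|f\|_{L^1(\text{shell})}$; the cancellation must be extracted from the structural identity $\int \kappa_r = 0$ (reflecting mass preservation by both $\mathscr{B}_s$ and $\mathscr{B}_r$) together with the uniform convergence of the ball averages $\mathscr{B}_\rho f(x)$ across the shell. Once this cancellation is isolated via the Abel-type integration by parts, the remaining estimates are routine.
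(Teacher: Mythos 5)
Your argument is correct, but it is worth noting that the paper does not prove Theorem \ref{PP-BW} at all: it is quoted from Plancherel--P\'olya and Benyamini--Weit as background, and the paper's actual work lies in the Damek--Ricci analogues (Theorems \ref{weth5}, \ref{weth1}, \ref{annth1}). Your route is therefore genuinely different from anything in the paper. You establish the global ball mean value property $\mathscr B_s g = g$ directly, by writing $\mathscr B_s\mathscr B_r f(x)-\mathscr B_r f(x)$ as integration of $f$ against the zero-mean radial kernel $\kappa_r$ supported in the shell $[r-s,r+s]$, and extracting the cancellation by Abel/Stieltjes integration by parts against $d[V_\rho\,\mathscr B_\rho f(x)]$; the leading term dies exactly because $\int\kappa_r=0$, and the remainder is controlled by $\mathrm{TV}(\wtilde\kappa_r)\le 2$, $V_{r+s}/V_r$ bounded, and the hypothesis. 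I checked the key points: the Fubini identity for the composed kernel, the vanishing of $\wtilde\kappa_r$ at $r\pm s$, the monotonicity of $\rho\mapsto|\B(x,s)\cap\B(y,r)|$ giving the total variation bound, and the second integration by parts reducing $\int V_\rho\,d\wtilde\kappa_r$ to $\int\kappa_r=0$; all are sound, and the converse mean value theorem for continuous functions finishes it. What your approach buys is a short, self-contained, quantitative proof; what it leans on is specifically Euclidean structure --- commutativity of convolution (so that the composed kernel $m_r*m_s$ is radial about $x$) and the harmonic normalization (weight $\equiv 1$, so the cancellation is exact rather than involving $V_r^\lambda$ and its zeros). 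These are exactly the features that fail on $S$ and for general eigenvalues, which is why the paper's proofs of the generalizations instead go through radialization, convexity of the distance function, carefully chosen radii avoiding the zeros of $V_r^\lambda$, and the local characterization of eigenfunctions in Proposition \ref{character-eigen}.
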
 
See also \cite{Weit} where limiting behaviour of sphere average of continuous functions on $\R^2$ is considered. 
An analogue of Theorem \ref{PP-BW} (and its generalization for eigenfunctions of the Laplace--Beltrami operator), considering the ball averages, in  rank one symmetric spaces is proved recently \cite{NRS1} by the authors and their collaborator. We are not aware of any other result in this direction and none except \cite{NRS1} considered eigenfunctions.

\subsection{Organization of the paper} We define all notation and collect the required  preliminaries in Section \ref{prelim}. Next three sections contain the proofs of the three theorems, stated above.   While the first and the third theorem will be proved in details, the proof of the second, namely the one involving ball-averages, will be a sketch since with some effort, a reader will be able to construct the proof from that of the third theorem.     In the last section we illustrate the asymptotic behaviour of some continuous functions for which the computation is easy. We also provide some counter examples to justify the formulations of our results and discuss an open question.

\section{Preliminaries}\label{prelim} In this section we shall establish notation and garner all the ingredients required for this paper.
\subsection{Basic Notation}  The letters  $\R,\, \R^\times,\, \R^+, \,\C$ and $\N$ denote respectively  the set of  real numbers, nonzero real numbers, positive real numbers, complex numbers and natural numbers. For $z\in \C$,  $\Re z$ and $\Im z$  denote respectively the real and imaginary parts of $z$.
We shall follow the practice of using the letters $C, C_1, C_2, C', c$
etc. for positive constants, whose value may change from one line to another.
The constants may be suffixed to show their
dependencies on important parameters.
Everywhere in this article the symbol $f_1\asymp f_2$ for two positive expressions $f_1$ and $f_2$
means that there are positive constants $C_1, C_2$ such that $C_1f_1\leq f_2\leq C_2f_1$.
For  a set $A$ in a topological space $\overline{A}$ is  its closure and for a set $A$ in a measure space $|A|$ denotes its measure. For two  functions $f_1, f_2$,  the notation $\langle f_1, f_2\rangle$   means  $\int f_1 f_2$ if  the integral makes sense.
\subsection{Damek--Ricci space}
\label{Drspace}
Let $\mathfrak n=\mathfrak v\oplus \mathfrak z$ be a $H$-type
Lie algebra where $\mathfrak v$ and $\mathfrak z$ are vector spaces
over $\R$ of dimensions $m$ and $k$ respectively. Indeed
$\mathfrak z$ is the centre of $\mathfrak n$ and $\mathfrak v$ is
its ortho-complement with respect to the inner product of
$\mathfrak n$. Then we  know that $m$ is even.  The group law of $N=\exp \mathfrak n$ is given by
\[(X, Y). (X', Y')=(X+X', Y+Y'+\frac 12[X, X']),\ \ X\in \mathfrak v, Y\in \mathfrak z.\]
We shall identify $\mathfrak v$, $\mathfrak z$ and $N$ with $\R^m$, $\R^l$ and
$\R^m\times \R^l$ respectively.  The group  $A=\{a_t=e^t \mid t\in \R\}$  acts on $N$ by nonisotropic
dilation: $\delta_{t}(X,Y)=(e^{t/2}X, e^{t}Y)$.
Let $S=NA=\{(X, Y, a_t)\mid (X, Y)\in N, t\in \R\}$ be
the semidirect product of $N$ and $A$ under the action above. The group law of $S$ is thus given by:
\[(X, Y, a_t) (X', Y', a_s)=(X+a_{t/2} X', Y+ a_{t} Y'+ \frac  {a_{t/2}}2 [X, X'], a_{t+s}).\] It then follows that
$\delta_{t}(X,Y)=a_tna_{-t}$, where $n=(X, Y)$.
The Lie group
$S$ is  solvable, connected and simply connected  with
Lie algebra $\mathfrak s=\mathfrak v\oplus\mathfrak z\oplus\R$. It
is well known that $S$ is  nonunimodular. The
homogeneous dimension of $S$ is $Q=m/2+k$. For convenience we shall also use the notation $\rho=Q/2$.
The group $S$ is equipped with the  distance $d$ induced by the left-invariant Riemannian
metric,
\begin{equation*}\langle(X, Z, \ell), (X', Z', \ell')\rangle =\langle X, X'\rangle+\langle Z, Z'\rangle+\ell\ell'
\end{equation*} on $\mathfrak s$.
The associated left invariant Haar measure $dx$ on $S$   is given
by
\begin{equation} \int_S f(x)\,dx=\int_{N\times A}f(na_t)e^{-Q
t}\,dt\,dn,
\label{measure-NA}
\end{equation} where $dn(X,Y)=dX\,dY$ and $dX, dY, dt$ are Lebesgue
measures on $\mathfrak v$, $\mathfrak z$ and $\R$ respectively.

The group $S$ can also be  realized as the unit ball
\begin{equation*}\B(\mathfrak  s)=\{(X, Z, \ell)\in \mathfrak  s\mid |X|^2+|Z|^2+\ell^2<1\}\end{equation*}
via a Cayley transform $C:S\longrightarrow  \B(\mathfrak s)$ (see
\cite[p.~646--647]{ADY} for details). For an element $x\in S$, let
\[|x|=d(x, e)=\log \frac{1+\|C(x)\|} {1-\|C(x)\|}.\] Then  $d(a_t, e)=|t|$. 
 We identify the boundary 
$\partial \B(\mathfrak s)$ of $\mathfrak s$ with $\Sph^{n-1}$ and through this identification we express an element $x \in S$ uniquely by \[x = \exp (r w),\]
where $r = d(e, x)$, $w \in \Sph^{n-1}$ and $\exp : \mathfrak s \to S$ is the exponential map.
The left Haar measure in geodesic polar
coordinates is  given by (\cite[(1.16)]{ADY})
\begin{equation}
dx=C\left(\sinh \frac r 2\right)^{m+k}\left(\sinh  \frac r 2\right)^k\,dr\, dw 
\label{DR-polar}
\end{equation} where $r=|x|$, $dw$ denotes the normalized
surface measure on the unit  sphere $\partial \B(\mathfrak s)$ in
$\mathfrak  s$, $dr$ is the Lebesgue measure on $\R$, $n= \dim  S= m+k+1$ and $C$ is a constant depending on $n$. For convenience we write it as $dx=J(r) dr dw$ and thus the  integral formula in this cordinate reads as
\[\int_Sf(x)\,dx=\int_0^\infty \int_{\Sph^{n-1}}f(\exp(rw)) J(r) \,dr\, dw.\]
A function $f$ on $S$ is called {\em radial} if for all $x,y\in
S$, $f(x)=f(y)$ if $d(x,e)=d(y,e)$. By abuse of notation we shall sometimes consider a radial function $f$ as a function of $|x|$ and for such a function
\begin{equation}\label{polar}
 \int_Sf(x)\,dx=\int_0^\infty f(r)J(r)\,dr.
\end{equation}
Since  $\cosh t \asymp e^t$ and $\sinh t\asymp te^t/(1+t)$ for $t\ge 0$, it follows from  \eqref{DR-polar} and \eqref{polar} that for a radial function $f\in L^1(S)$,
\begin{eqnarray}
\int_S|f(x)|\,dx\asymp C_1\int_0^1|f(t)|
t^{n-1}\,dt +C_2 \int_1^\infty
|f(t)|e^{2\rho t}\,dt. \label{polar-2}
\end{eqnarray} 
For a suitable function $f$ on $S$ its radialization $Rf$ is
defined as
\begin{equation}Rf(x)=\int_{\Sph(e, \nu)}f(y)\,d\sigma_\nu(y),
\label{radialization}
\end{equation} where $\nu=|x|$ and $d\sigma_\nu$ is the normalized
surface measure induced by the left invariant Riemannian metric on
the geodesic sphere $\Sph(e, \nu)= \{y\in S\mid d(y, e)=\nu\}$. 
It is clear that $Rf$ is a
radial function and if $f$ is radial then $Rf=f$.
The following properties of the radialization operator will be needed (see \cite{DR, ACB}):
\begin{enumerate}
\item $\langle R \phi, \psi\rangle=\langle \phi, R\psi\rangle,\ \ \phi, \psi\in C^\infty_c(S)$.
\item $R(\Delta f)=\Delta(R f)$.
\end{enumerate}

To proceed towards the Fourier transform we need to introduce the notion of Poisson kernel.
The Poisson kernel $\wp:S\times N\longrightarrow \R$ is given by
$\wp(x,n)=\wp(n_1a_t, n)=P_{a_t}(n^{-1}n_1)$ where
\begin{equation}P_{a_t}(n)=P_{a_t}(X, Y)=C a_t^{Q}\left(\left(a_t+\frac{|X|^2}4\right)^2+|Y|^2\right)^{-Q},\,\, n=(X,
Y)\in N. \label{poisson}\end{equation}  The  value of $C$ is
adjusted so that \[
\int_NP_a(n)\, dn=1 \,\,\,(\text{see \cite[(2.6)]{ACB}}
). \] For $\lambda\in \C$, we define
\[\wp_\lambda(x, n)=\wp(x,n)^{1/2-i\lambda/Q}=\wp(x,n)^{-(i\lambda-\rho)/Q}.\] Then it is known that for each fixed $n\in N$,
\[\Delta \wp_\lambda(x, n)=-(\lambda^2+\rho^2) \wp_\lambda(x, n).\]
The elementary spherical function $\varphi_\lambda(x)$ is  given by
\[\varphi_\lambda(x)=
\int_N\wp_\lambda(x,n)\wp_{-\lambda}(e,n)\,dn.\]
The elementary spherical function $\varphi_\lambda(x)$ has the following
properties.
\begin{enumerate}
 \item [(i)]  $\varphi_\lambda$ is a smooth radial function on $S$,  $\Delta \varphi_\lambda= -(\lambda^2+\rho^2)\varphi_\lambda$,   $\varphi_\lambda(e)=1$.
\item[(ii)] $ \varphi_\lambda$ is a strictly positive function
for $\lambda \in i\R$ and $|\varphi_\lambda|\le \varphi_{i \Im \lambda }$ for all $\lambda\in \C$.
\item[(iii)]  $\varphi_\lambda(x)= \varphi_{-\lambda}(x)= \varphi_\lambda(x^{-1})$ for all $\lambda\in \C$, and for all $x\in S$.
  \end{enumerate}
 Moreover property (i) characterizes $\varphi_\lambda$ completely i.e.  $\varphi_\lambda$ is the unique radial smooth eigenfunction of  $\Delta$  with eigenvalue $-(\lambda^2+\rho^2)$ satisfying  $\varphi_\lambda(e)=1$. Since $\wp_{-i\rho}(x,n)\equiv 1$ for all $x\in S, n\in N$  and
$\wp_{i\rho}(x,n)=\wp(x,n)$, it follows that
\[\varphi_{-i\rho}(x)=\int_N\wp_{i\rho}(e,n)\, dn=\int_NP_1(n)\,dn=1,\]

 For $\Im \lambda <0$ and $t>0$, we have the following asymptotic estimate of $\varphi_\lambda $,
\begin{equation}
\label{esti}
 \displaystyle \lim_{t \rightarrow \infty} e^{ -(i\lambda - \rho)t} \varphi_ \lambda(t) = \hc(\lambda)
\end{equation}
 where $\hc(\lambda)$ is the analogue of Harish-Chandra $\hc$-function (see \cite[(2.7), p. 648]{ADY}).

Since  the $\hc$-function has neither zero nor pole in the region $\Im \lambda <0$ and $\varphi_\lambda=\varphi_{-\lambda}$, from this we conclude that
for any $\lambda\in \C$ with $\Im \lambda\neq 0$, there is a $t_\lambda >0$ such that
\begin{equation}
\label{varphiesti1}
 |\varphi_\lambda(t)| \asymp \varphi_{i\Im \lambda}(t) \asymp   e^{(|\Im \lambda| - \rho)t} \text{ for } t> t_\lambda.
\end{equation}
We note that $\varphi_\lambda$ is a strictly positive function for $\lambda \in i\R$. Therefore by \eqref{esti} 
\begin{equation}
\label{phiesti}
 \varphi_{\lambda}(t) \asymp e^{(|\Im \lambda|-\rho)t }\, \, \text{ for } \lambda \in i \R, \lambda \neq 0.
\end{equation}

For $\lambda=0$ we also have (see \cite{ADY})
\begin{equation}
\label{phioesti}
\varphi_0(t) \asymp (1+t)e^{-\rho t}.
\end{equation}
For a measurable function $f$ on $S$  and $\lambda \in \C$, we define the spherical Fourier transform  of $f$ at $\lambda$ by
\begin{equation} \label{pg9}
 \what{f}(\lambda) := \int_X f(x) \varphi_\lambda(x) \ dx,
\end{equation}
whenever the integral makes sense. The notation $\Sph(x,r), \B(x, r)$, $\A_{r_1, r_2}(x)$, $V_r^\lambda$ and $V^\lambda_{r_1,r_2}$ are as defined in the introduction. Thus $V_r^\lambda$ and $V^\lambda_{r_1,r_2}$  are the spherical Fourier transform at $\lambda$, of the indicator function of the ball $\B(e, r)$ and the annulus $\A_{r_1, r_2}(e)$  respectively.  Since $\varphi_\lambda= \varphi_{-\lambda}$, it is also clear that $V_r^\lambda$ = $V_r^{-\lambda}$ and $V^\lambda_{r_1,r_2}$= $V^{-\lambda}_{r_1,r_2}$.

\subsection{Jacobi functions} \label{Jacobi}
For $\alpha, \beta >-1/2$, $\lambda\in \C$ and $t\ge 0$, let $\phi_\lambda^{(\alpha, \beta)}(t)$ denote the Jacobi functions
defined by\[\phi_\lambda^{(\alpha, \beta)}(t):= \, _2F_1\bigg(\frac 1 2(\alpha+\beta+1-i\lambda), \frac 1 2(\alpha+\beta+1+i\lambda); \alpha+1;-\sinh^2(t)\bigg), \]
 where   $_2F_1(a,b;c;z)$ is the Gaussian hypergeometric function. It follows from the property of $_2F_1$ that $\phi_\lambda^{(\alpha, \beta)}=\phi_{-\lambda}^{(\alpha, \beta)}$ for $\alpha, \beta$ and $\lambda$ as above. For a detailed account on Jacobi functions  we refer to  \cite{Koorn}.
 We note here that both the symbols $\varphi_\lambda$ for elementary spherical functions on $S$ and $\phi_\lambda^{(\alpha, \beta)}$ for Jacobi functions are standard and widely used in the literature. We hope the use of these symbols will not confuse the readers.

 We recall that for specific parameters $\alpha, \beta$, Jacobi functions coincide with the elementary spherical functions, realized as  functions on nonnegative real numbers through the polar decomposition of $S$. In our parametrization, they are related in the following way (\cite{ADY}, p. 650):
\begin{equation}
\label{Jacobi-esf}
\varphi_\lambda(t)=\phi_{2\lambda}^{(\alpha, \beta)}(t/2), \text { where } \alpha=(m+k-1)/2, \beta=(k-1)/2.
\end{equation}
 However, for arbitrary $\alpha, \beta>-1/2$, a Jacobi function may not be an elementary spherical function of any Damak--Ricci space $S$. The asymptotic estimates of the elementary spherical functions described in \eqref{esti} -- \eqref{phiesti} generalizes for the Jacobi functions in the following way. 

For $\Im \lambda <0$ we have (\cite[2.19]{Koorn})
\begin{equation}
 \label{baaaeq1}
  \phi_\lambda^{(\alpha,\beta)}(t) =  \hc_{\alpha,\beta}(\lambda)e^{(i \lambda -\alpha-\beta-1)t}(1+o(1)) \text{ as } t \rightarrow \infty,
 \end{equation} where  $\hc_{\alpha,\beta}(\lambda)$  is an analogue of the Harish-Chandra $\hc$-function which has neither zero nor pole in the region  $\Im \lambda<0$.

Hence for $\lambda \in \C$ with $\Im \lambda < 0$, 
\begin{equation}
 \label{baeq1}
  \lim_{t\to \infty }e^{-(i \lambda -\alpha-\beta-1)t}\phi_\lambda^{(\alpha,\beta)}(t) =  \hc_{\alpha,\beta}(\lambda) .\end{equation}
That is  for $\lambda \in \C$ with $\Im \lambda <0$ and sufficiently small $\epsilon >0$, there exists $t(\lambda, \epsilon) >0$ such that
\begin{equation}
\label{baleq4}
  (|\hc_{\alpha, \beta}(\lambda)|-\epsilon)  e^{(|\Im \lambda |-\alpha-\beta-1)t} \le |\phi_\lambda^{(\alpha,\beta)}(t)| \le  |(\hc_{\alpha, \beta}(\lambda)|-\epsilon )  e^{(|\Im \lambda |-\alpha-\beta-1)t},
   \end{equation}
for all $t >t(\lambda, \epsilon)$.
Since $\phi_\lambda^{(\alpha,\beta)}= \phi_{-\lambda}^{(\alpha,\beta)}$,  we have for all 
$\lambda \in \C$ with $\Im \lambda \neq 0$,
\begin{equation}
\label{baeq4}
  |\phi_\lambda^{(\alpha,\beta)}(t)| \asymp e^{(|\Im \lambda |-\alpha-\beta-1)t} \text{ as } t \rightarrow \infty.
   \end{equation}

We now quote the following result from \cite[Lemma 5.2(a)]{PS},  which shows that $V_r^\lambda$  can be expressed in terms of the Jacobi functions. We recall that $V_r^\lambda$ is the Fourier transform of  $\chi_{\B(e, r)}$ at $\lambda$.
\begin{theorem}\label{Thm-Jacobi-1}
 Let $\alpha'=\frac{m+k+1}{2},  \beta'=\frac{k+1}{2}$ and $n=m+k+1=\dim S$. Then for $\lambda \in \C$ and $r>0$,

 \begin{eqnarray}
 \label{baeq2}
  V_r^\lambda &=& \frac{2^n\pi^{\frac n 2}}{\Gamma(\frac n 2+1)} \sinh^n \left(\frac {r}{ 2}\right) \cosh^{k+1} \left(\frac r 2\right)
 \phi_{2\lambda}^{(\alpha', \beta')}\left(\frac r 2\right)\\&=&\frac{4^{\alpha'}\pi^{\alpha'}}{\Gamma(\alpha'+1)} \sinh^{2\alpha'} \left(\frac {r}{ 2}\right) \cosh^{2\beta'} \left(\frac r 2\right)
 \phi_{2\lambda}^{(\alpha', \beta')}\left(\frac r 2\right). \nonumber
 \end{eqnarray}
\end{theorem}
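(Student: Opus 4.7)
The proof reduces to a one-dimensional integral computation followed by a classical shift identity for Jacobi functions. First, since $\varphi_\lambda$ is radial, the polar integration formula for $S$ gives
\[
V_r^\lambda \;=\; C_n \int_0^r \varphi_\lambda(t)\,\sinh^{m+k}(t/2)\cosh^{k}(t/2)\,dt
\]
for an explicit normalization constant $C_n$ which makes the volume element reduce to Lebesgue measure on $\R^n$ near $e$. Substituting $s=t/2$ and invoking \eqref{Jacobi-esf} with $\alpha=(m+k-1)/2$ and $\beta=(k-1)/2$, so that $2\alpha+1=m+k$, $2\beta+1=k$, $\alpha+1=\alpha'$ and $\beta+1=\beta'$, recasts the integral as
\[
V_r^\lambda \;=\; 2C_n \int_0^{r/2}(\sinh s)^{2\alpha+1}(\cosh s)^{2\beta+1}\,\phi_{2\lambda}^{(\alpha,\beta)}(s)\,ds.
\]

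The heart of the argument is then the Sturm--Liouville shift identity
\[
\int_0^u (\sinh s)^{2\alpha+1}(\cosh s)^{2\beta+1}\,\phi_\mu^{(\alpha,\beta)}(s)\,ds \;=\; \frac{(\sinh u)^{2\alpha+2}(\cosh u)^{2\beta+2}}{2(\alpha+1)}\,\phi_\mu^{(\alpha+1,\beta+1)}(u),
\]
which is standard (cf.\ \cite{Koorn}) and can be verified in two lines: write the Jacobi equation in divergence form $(c_{\alpha,\beta}\phi')' = -(\mu^2+(\alpha+\beta+1)^2)\,c_{\alpha,\beta}\phi$ with $c_{\alpha,\beta}(s)=(\sinh s)^{2\alpha+1}(\cosh s)^{2\beta+1}$, integrate from $0$ to $u$ (the boundary term at $0$ vanishes), and substitute the contiguous-derivative formula $(\phi_\mu^{(\alpha,\beta)})'(s) = -\tfrac{\mu^2+(\alpha+\beta+1)^2}{4(\alpha+1)}\sinh(2s)\,\phi_\mu^{(\alpha+1,\beta+1)}(s)$, itself immediate from term-by-term differentiation of the defining ${}_2F_1$-series.

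Applying the identity with $\mu=2\lambda$ and $u=r/2$ produces a closed form of exactly the shape claimed in \eqref{baeq2}, up to the overall prefactor. The only real obstacle -- and it is purely bookkeeping -- is to align the factor-of-two conventions between the Damek--Ricci polar density (which halves radii and parameters) and the Jacobi normalization. Using $\alpha+1=n/2$, one finds $2C_n/(2(\alpha+1)) = 2C_n/n$, and the explicit value $C_n = 2^n\pi^{n/2}/\Gamma(n/2)$ combined with $\Gamma(n/2+1)=(n/2)\Gamma(n/2)$ collapses this to $2^n\pi^{n/2}/\Gamma(n/2+1)$, as required. A convenient sanity check is $\lambda=i\rho$: there $\varphi_{i\rho}\equiv 1$ forces $V_r^{i\rho}=|\B(e,r)|$, and the right-hand side of \eqref{baeq2} correctly reproduces the Euclidean volume $\pi^{n/2}r^n/\Gamma(n/2+1)$ as $r\to 0$.
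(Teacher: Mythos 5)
Your proposal is correct, but it is doing something the paper does not attempt: the paper offers no proof of this statement at all, simply quoting it from \cite[Lemma 5.2(a)]{PS} and correcting a typo there ($\cosh^{k-1}(r/2)$ to $\cosh^{k+1}(r/2)$). So your argument should be judged as a self-contained derivation, and as such it checks out. The reduction to $V_r^\lambda = 2C_n\int_0^{r/2}(\sinh s)^{2\alpha+1}(\cosh s)^{2\beta+1}\phi_{2\lambda}^{(\alpha,\beta)}(s)\,ds$ via \eqref{Jacobi-esf} is right (note you have silently, and correctly, repaired the typo in \eqref{DR-polar}, where the second factor should be $\cosh^k(r/2)$, not $\sinh^k(r/2)$). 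The two ingredients of your shift identity are both sound: the divergence form $(c_{\alpha,\beta}\phi')'=-(\mu^2+(\alpha+\beta+1)^2)c_{\alpha,\beta}\phi$ with weight $c_{\alpha,\beta}(s)=(\sinh s)^{2\alpha+1}(\cosh s)^{2\beta+1}$, whose boundary term at $0$ vanishes since $2\alpha+1>0$; and the contiguous relation
\[
\bigl(\phi_\mu^{(\alpha,\beta)}\bigr)'(s)=-\frac{\mu^2+(\alpha+\beta+1)^2}{4(\alpha+1)}\,\sinh(2s)\,\phi_\mu^{(\alpha+1,\beta+1)}(s),
\]
which follows from $\frac{d}{dz}\,{}_2F_1(a,b;c;z)=\frac{ab}{c}\,{}_2F_1(a{+}1,b{+}1;c{+}1;z)$ with $ab=\frac{(\alpha+\beta+1)^2+\mu^2}{4}$ (strictly, term-by-term differentiation is licensed only for $\sinh^2 s<1$, but the identity persists for all $s>0$ by real-analyticity). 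The constant chase is also correct: $\alpha+1=\alpha'=n/2$, $\beta+1=\beta'$, and $2C_n/n=2^n\pi^{n/2}/\Gamma(n/2+1)$ with $C_n=2^n\pi^{n/2}/\Gamma(n/2)$, which your $r\to 0$, $\lambda=i\rho$ sanity check independently confirms. What your route buys is a transparent two-line verification of a formula the paper treats as a black box; what the citation buys the authors is brevity. No gaps.
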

A minor error in the expression of $V_r^\lambda$ given in \cite[Lemma 5.2(a)]{PS} is rectified here. In the first line of \eqref{baeq2} $\cosh^{k-1}(r/2)$ is corrected to $\cosh^{k+1}(r/2)$.

We end this subsection with the following estimate of the Jacobi function which we shall use.
\begin{proposition} \label{lem-ww} Let $\lambda\in \R^\times$ and $\alpha, \beta>-1/2$  be fixed.
There exists constants $A_\lambda>0$, $C_\lambda >0$, $\theta_\lambda\in \R$ and a function $\epsilon_\lambda^\ast:\R^+ \to \R$ satisfying $|\epsilon_\lambda^\ast(t)| \le C_\lambda e^{-2t}$, depending only on $\alpha, \beta$ and $\lambda$ such that for $t>0$,
\[(\sinh t)^{\alpha+1/2} (\cosh t)^{\beta+1/2} \phi_\lambda^{(\alpha, \beta)}(t) =  A_\lambda[\cos(\lambda t+\theta_\lambda) + \epsilon_\lambda^\ast(t)].\]
\end{proposition}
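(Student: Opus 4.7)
The plan is to derive the claimed decomposition directly from the Harish--Chandra series expansion of the Jacobi function. Recall from \cite{Koorn} that for $\lambda\in\R^\times$ the Jacobi function admits the representation
\[\phi_\lambda^{(\alpha,\beta)}(t) = \hc_{\alpha,\beta}(\lambda)\,\Phi_\lambda(t) + \hc_{\alpha,\beta}(-\lambda)\,\Phi_{-\lambda}(t),\qquad t>0,\]
where the two basis solutions have the absolutely convergent expansion
\[\Phi_\lambda(t) = e^{(i\lambda-\alpha-\beta-1)t}\sum_{n=0}^{\infty}\Gamma_n(\lambda)\,e^{-2nt},\qquad \Gamma_0(\lambda)=1.\]
Using $\sinh t=\tfrac12 e^t(1-e^{-2t})$ and $\cosh t=\tfrac12 e^t(1+e^{-2t})$ to pull the leading exponential outside, a direct multiplication gives
\[(\sinh t)^{\alpha+1/2}(\cosh t)^{\beta+1/2}\Phi_\lambda(t) = 2^{-(\alpha+\beta+1)}e^{i\lambda t}H_\lambda(t),\]
where
\[H_\lambda(t) := (1-e^{-2t})^{\alpha+1/2}(1+e^{-2t})^{\beta+1/2}\sum_{n=0}^{\infty}\Gamma_n(\lambda)\,e^{-2nt}.\]

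Next I would exploit a conjugation symmetry to collapse the two terms into a single cosine. Because the Jacobi differential operator has real coefficients and depends on $\lambda$ only through $\lambda^2$, the uniqueness of the series solution with the prescribed leading exponential forces $\Phi_{-\lambda}(t)=\overline{\Phi_\lambda(t)}$ for real $\lambda$, whence $\Gamma_n(-\lambda)=\overline{\Gamma_n(\lambda)}$, $\hc_{\alpha,\beta}(-\lambda)=\overline{\hc_{\alpha,\beta}(\lambda)}$, and $H_{-\lambda}(t)=\overline{H_\lambda(t)}$. Writing $\hc_{\alpha,\beta}(\lambda)=|\hc_{\alpha,\beta}(\lambda)|e^{i\theta_\lambda}$ and summing over the two basis terms then yields
\[(\sinh t)^{\alpha+1/2}(\cosh t)^{\beta+1/2}\phi_\lambda^{(\alpha,\beta)}(t) = 2^{-(\alpha+\beta)}\,\Re\!\left[\hc_{\alpha,\beta}(\lambda)e^{i\lambda t}H_\lambda(t)\right].\]
Setting $A_\lambda := 2^{-(\alpha+\beta)}|\hc_{\alpha,\beta}(\lambda)|$ — strictly positive because $\hc_{\alpha,\beta}$ does not vanish on $\R^\times$ — and $\epsilon_\lambda^\ast(t):=\Re[e^{i(\lambda t+\theta_\lambda)}(H_\lambda(t)-1)]$, the right-hand side splits as $A_\lambda[\cos(\lambda t+\theta_\lambda)+\epsilon_\lambda^\ast(t)]$, matching the asserted form.

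The remaining quantitative point — and the only real obstacle — is to verify $|\epsilon_\lambda^\ast(t)|\le C_\lambda e^{-2t}$, which reduces to $|H_\lambda(t)-1|\le C_\lambda e^{-2t}$. For fixed $\lambda\in\R^\times$, each of the three factors composing $H_\lambda$ is a $1+O(e^{-2t})$ quantity on $[t_0,\infty)$: the first two by binomial expansion, and the Harish--Chandra tail $\sum_{n\ge 1}\Gamma_n(\lambda)e^{-2nt}$ because the coefficients $\Gamma_n(\lambda)$ grow at most polynomially in $n$. This coefficient bound is the principal technical input and is a standard piece of Jacobi-analysis machinery, which I would simply cite from \cite{Koorn}. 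Multiplying the three factors gives the desired estimate on $[t_0,\infty)$; for $t\in(0,t_0]$ the bound is obtained for free by enlarging $C_\lambda$, since both sides of the claimed identity are continuous bounded functions on that compact interval while $e^{-2t}\ge e^{-2t_0}>0$.
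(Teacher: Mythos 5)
Your derivation is essentially correct, but note that the paper does not prove this proposition at all: it disposes of it with a citation to Wong and Wang \cite[(6.12)--(6.15)]{WW}, where the representation is obtained from their uniform asymptotic analysis of Jacobi functions. What you give instead is a self-contained derivation from the Harish--Chandra expansion $\phi_\lambda^{(\alpha,\beta)}=\hc_{\alpha,\beta}(\lambda)\Phi_\lambda+\hc_{\alpha,\beta}(-\lambda)\Phi_{-\lambda}$, and this is exactly the mechanism the paper itself invokes for $\varphi_\lambda$ with $\lambda\in\R^\times$ in the proof of Theorem \ref{weth5} (see \eqref{speesti1}, quoted from \cite{Ion-Pois-1}), so your route is arguably more consistent with the rest of the text. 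The algebra pulling $e^{(\alpha+\beta+1)t}$ out of $(\sinh t)^{\alpha+1/2}(\cosh t)^{\beta+1/2}$, the conjugation symmetries $\Phi_{-\lambda}=\overline{\Phi_\lambda}$ and $\hc_{\alpha,\beta}(-\lambda)=\overline{\hc_{\alpha,\beta}(\lambda)}$ for real $\lambda$, and the collapse to $2^{-(\alpha+\beta)}\,\Re\bigl[\hc_{\alpha,\beta}(\lambda)e^{i\lambda t}H_\lambda(t)\bigr]$ all check out, and your compact-interval argument on $(0,t_0]$ is the right way to upgrade a large-$t$ asymptotic into the bound for \emph{all} $t>0$ that the proposition actually asserts. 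Two inputs remain external and deserve precise citations: the polynomial growth $|\Gamma_n(\lambda)|\le C_\lambda(1+n)^d$ (the Gangolli-type estimate, available in \cite{Koorn}), which is what makes the tail of the series $O(e^{-2t})$ on $[t_0,\infty)$, and the nonvanishing of $\hc_{\alpha,\beta}$ on $\R^\times$, which follows from its explicit Gamma-function formula but is not covered by the paper's quoted statement that $\hc_{\alpha,\beta}$ has no zeros for $\Im\lambda<0$. The trade-off between the two routes: citing \cite{WW} yields sharper, more uniform information (including the zeros of the Jacobi function), while your argument is elementary and keeps the proof within the circle of facts the paper already uses.
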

For a proof of the proposition above we refer to  \cite[(6.12)--(6.15)]{WW}.

\begin{remark}\label{remark-zeros}
Let us restrict to $\lambda\in \R^\times$. It is clear from the proposition above and \eqref{Jacobi-esf} that $\varphi_\lambda(r)$ and hence $V^\lambda_r$ and $V^\lambda_{r,r'}$ are real numbers for $r>0, r'>0$. It also follows from the proposition, \eqref{Jacobi-esf} and Theorem \ref{Thm-Jacobi-1} that the sets of zeros of the functions $r\mapsto \varphi_\lambda(r)$ and $r\mapsto V^\lambda_r$ are sequences diverging to infinity. It can also be shown that except possibly some carefully chosen $d$ and $\delta$, the set $\{(r',r)\mid d<r'-r<d+\delta, V^\lambda_{r,r'}=0\}$ is also an unbounded set. This was alluded to in the introduction.
 
\end{remark}


\subsection{Geodesic convexity of the distance function}
Riemannian manifolds of non-positive curvatures, hence in particular the  Damek--Ricci spaces are $\mathbf{\mathrm CAT}(0)$ spaces. We need the following property of the distance function in  $\mathbf{\mathrm CAT}(0)$ spaces (see \cite[p. 176, Prop 2.2, Chap II]{Brid-Haef}, \cite[p. 24, Chap 1, Prop 5.4]{Ballman}).
\begin{proposition} \label{convex-distance}
Let  $\mathcal M$ be a $\mathbf{\mathrm CAT}(0)$ space and $x_0\in \mathcal M$. Then the distance function $x\mapsto d(x_0, x)$ from   $\mathcal M\to \R$ is geodesically convex, i.e. given any  geodesics  $\gamma: [0, 1]\to \mathcal M$, parameterized proportional to arc length, the following inequality holds for all $t\in [0, 1]:$
\[d(x_0, \gamma(t))\le (1-t) d(x_0, \gamma(0)) + t d(x_0, \gamma(1)).\]
\end{proposition}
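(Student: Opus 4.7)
The plan is to reduce the claim to a Euclidean comparison argument, which is essentially the content of the $\mathbf{\mathrm{CAT}}(0)$ condition. Fix $t\in[0,1]$ and consider the geodesic triangle in $\mathcal M$ with vertices $x_0$, $\gamma(0)$, $\gamma(1)$. By the very definition of a $\mathbf{\mathrm{CAT}}(0)$ space, one can build a comparison triangle $\bar\Delta$ in $\R^2$ with vertices $\bar x_0,\bar p,\bar q$ satisfying
\[|\bar x_0-\bar p|=d(x_0,\gamma(0)),\quad|\bar x_0-\bar q|=d(x_0,\gamma(1)),\quad|\bar p-\bar q|=d(\gamma(0),\gamma(1)).\]

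Since $\gamma$ is parametrized proportionally to arc length, the point $\gamma(t)$ divides the geodesic segment from $\gamma(0)$ to $\gamma(1)$ in the ratio $t:(1-t)$, so the associated comparison point in $\bar\Delta$ is the Euclidean affine combination $\bar\gamma(t):=(1-t)\bar p+t\bar q$. Applying the $\mathbf{\mathrm{CAT}}(0)$ inequality to the pair $(x_0,\gamma(t))$ and its comparison pair $(\bar x_0,\bar\gamma(t))$ yields
\[d(x_0,\gamma(t))\le|\bar x_0-\bar\gamma(t)|.\]

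To conclude, I would invoke the elementary fact that the Euclidean distance from a fixed point to a variable point on a line segment is a convex function. Indeed,
\[\bar x_0-\bar\gamma(t)=(1-t)(\bar x_0-\bar p)+t(\bar x_0-\bar q),\]
so the triangle inequality in $\R^2$ gives
\[|\bar x_0-\bar\gamma(t)|\le(1-t)|\bar x_0-\bar p|+t|\bar x_0-\bar q|.\]
Chaining this with the previous inequality and substituting the defining equalities of $\bar\Delta$ produces the stated bound.

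There is no genuine obstacle here; once the comparison definition of $\mathbf{\mathrm{CAT}}(0)$ is available, the argument is a textbook application combined with convexity of Euclidean norms along affine segments. The only minor point worth flagging is that geodesics between any two points in a $\mathbf{\mathrm{CAT}}(0)$ space are unique, which ensures the comparison point $\bar\gamma(t)$ is unambiguously defined; this uniqueness itself is a standard consequence of the $\mathbf{\mathrm{CAT}}(0)$ inequality and requires no separate verification.
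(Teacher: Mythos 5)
Your argument is correct. The paper does not actually prove this proposition; it quotes it from the literature (Bridson--Haefliger, Chap.\ II, Prop.\ 2.2, and Ballmann, Chap.\ 1, Prop.\ 5.4), and your comparison-triangle argument is precisely the standard proof given in those references: the $\mathbf{\mathrm{CAT}}(0)$ inequality applied to the vertex $x_0$ and the point $\gamma(t)$ on the opposite side, followed by convexity of the Euclidean norm along the affine segment $(1-t)\bar p+t\bar q$. All steps check out, including the identification of the comparison point via the proportional-to-arc-length parametrization, so nothing further is needed.
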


Using this property we shall prove the following important step towards Theorem \ref{annth1}.
\begin{proposition} \label{lemma-convex} Let $x_o\in S$, $w \in \Sph^{n-1}$ and $r> d(x_o, e)$ be fixed. Suppose that for some $s_o>0$, $d(x_o, \exp (s_ow))=r$.
Then for any positive  $s$,   $d(x_o, \exp (sw))>r$ if and only if $s>s_o$.
\end{proposition}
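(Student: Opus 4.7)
The plan is to view $\gamma(s) := \exp(sw)$ as a unit speed geodesic ray starting at $e$ and to apply Proposition \ref{convex-distance} to the CAT$(0)$ space $S$ in order to establish that $f(s) := d(x_o, \gamma(s))$ is a convex function on $[0, \infty)$. Once convexity is in hand, together with the two data points $f(0) = d(x_o, e) < r$ and $f(s_o) = r$, both directions of the ``if and only if'' follow from elementary one-variable convex analysis.

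More precisely, first I would note that since $w \in \Sph^{n-1}$ is a unit vector in $\mathfrak{s}$, the curve $\gamma(s) = \exp(sw)$ is a geodesic parameterized by arc length in the complete simply connected nonpositively curved manifold $S$. For any $0 \le a < b$ and $t \in [0,1]$, the restriction of $\gamma$ to $[a,b]$ is a geodesic (reparameterized by $s \mapsto (1-t)a + tb$), so Proposition \ref{convex-distance} yields
\[
f\bigl((1-t)a + tb\bigr) \le (1-t) f(a) + t f(b).
\]
Thus $f$ is convex on $[0,\infty)$.

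Next, for the ``only if'' direction, take any $s$ with $0 < s < s_o$ and write $s = (1-t)\cdot 0 + t \cdot s_o$ with $t = s/s_o \in (0,1)$. Convexity gives
\[
f(s) \le (1-t) f(0) + t f(s_o) < (1-t) r + t r = r,
\]
where the strict inequality uses $f(0) < r$. Hence $f(s) < r$ on $(0, s_o)$, and by hypothesis $f(s_o) = r$, so $f(s) > r$ forces $s > s_o$. For the ``if'' direction, suppose for contradiction that some $s_1 > s_o$ satisfies $f(s_1) \le r$. Write $s_o = (1-t)\cdot 0 + t \cdot s_1$ with $t = s_o/s_1 \in (0,1)$. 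Convexity then gives
\[
r = f(s_o) \le (1-t) f(0) + t f(s_1) < (1-t) r + t r = r,
\]
a contradiction. Hence $f(s) > r$ for all $s > s_o$.

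The argument is essentially one-dimensional once the CAT$(0)$ convexity of $f$ is extracted, so there is no real obstacle beyond invoking Proposition \ref{convex-distance}; the only mild subtlety is making sure to use the strict inequality $f(0) < r$ coming from the hypothesis $r > d(x_o, e)$, which is precisely what prevents $f$ from being the constant function $r$ on $[0, s_o]$ and powers both directions of the equivalence.
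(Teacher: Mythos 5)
Your proof is correct and follows essentially the same route as the paper's: both invoke Proposition \ref{convex-distance} to get convexity of $s\mapsto d(x_o,\exp(sw))$ and then combine the data points $f(0)<r$ and $f(s_o)=r$ via the one-variable convexity inequality, with the strict inequality $f(0)<r$ driving both directions. Your write-up merely makes explicit the choices of the interpolation parameter $t$ that the paper leaves implicit.
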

\begin{proof}
By Proposition \ref{convex-distance}, the  function
\[\alpha(s)=d(x_o, \exp (sw)), \,\,\, s\ge 0\] is convex. If $s\in (0, s_o)$ then it follows from convexity of $\alpha$ and the assumption that $\alpha(0)< \alpha(s_o)=r$ that
\[\alpha(s)=d(x_o, \exp (sw))<r.\]
If $s_o\in (0, s)$ and $\alpha(s) \le \alpha(s_o)=r$, then again convexity of $\alpha$ implies that $\alpha(s_o)<r$,
which is a contradiction. Hence $d(x_o, \exp (sw))>r$.
\end{proof}

\subsection{Characterization of eigenfunction} \label{characterization-eigenfucntion}
We recall that the average of a function $f$ on a sphere of radius $t>0$ around $x \in S$ is given by
   \[\mathscr M_tf(x)=\int_{|y|=t} f(xy) \, d\sigma_t(y) =f\ast \sigma_t(x),\]
   where $\sigma_t$ is the normalized surface measure on the sphere of radius $t$  with center at the identity $e$.

 For a Riemannian manifold $\mathcal M$, $\dim \mathcal M=n$, $f\in C^\infty(\mathcal M)$ and a point $x \in \mathcal M$, we have (\cite[(6.185), p. 249]{Will})
  \begin{equation}\mathscr M_tf(x)=f(x)+\frac{1}{2n}\Delta f(x)t^2 +O(t^4).
  \label{mean-laplace}
  \end{equation} 
  
 Using \eqref{mean-laplace} one can prove the following.
\begin{proposition} \label{Blaschke-S} If a  function $f\in C^\infty(S)$  satisfies
 \[\lim_{t \to 0} \frac{\mathscr M_t f- \varphi_\lambda(t) f}{t^2}=0,
 \]
then $\Delta f = -(\lambda^2+\rho^2)f$.
\end{proposition}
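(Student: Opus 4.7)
The plan is to combine two Taylor expansions near $t=0$: one for the sphere average of $f$ at a point $x$, and one that yields the Taylor expansion of $\varphi_\lambda(t)$ itself. Both come from the same manifold-level identity \eqref{mean-laplace}.

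First, since $f\in C^\infty(S)$, applying \eqref{mean-laplace} at the point $x\in S$ gives
\[
\mathscr M_t f(x) = f(x) + \frac{t^2}{2n}\Delta f(x) + O(t^4).
\]
Next, I would compute the Taylor expansion of $\varphi_\lambda(t)$. The elementary spherical function $\varphi_\lambda$ is a radial eigenfunction with $\varphi_\lambda(e)=1$ and $\Delta\varphi_\lambda = -(\lambda^2+\rho^2)\varphi_\lambda$, and satisfies the sphere mean value property \eqref{MVP-sphere}. Evaluating $\varphi_\lambda\ast\sigma_t = \varphi_\lambda(t)\,\varphi_\lambda$ at the identity $e$ yields
\[
\mathscr M_t \varphi_\lambda(e) = \varphi_\lambda(t).
\]
On the other hand, applying \eqref{mean-laplace} to $\varphi_\lambda$ at $x=e$ gives
\[
\mathscr M_t \varphi_\lambda(e) = 1 + \frac{t^2}{2n}\Delta\varphi_\lambda(e) + O(t^4) = 1 - \frac{t^2}{2n}(\lambda^2+\rho^2) + O(t^4).
\]
Comparing the two displays,
\[
\varphi_\lambda(t) = 1 - \frac{t^2}{2n}(\lambda^2+\rho^2) + O(t^4).
\]

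Subtracting $\varphi_\lambda(t)f(x)$ from the expansion of $\mathscr M_t f(x)$, the constant terms $f(x)$ cancel and the coefficients of $t^2$ combine to give
\[
\mathscr M_t f(x) - \varphi_\lambda(t)f(x) = \frac{t^2}{2n}\bigl[\Delta f(x) + (\lambda^2+\rho^2)f(x)\bigr] + O(t^4).
\]
Dividing by $t^2$ and passing to the limit $t\to 0$, the hypothesis forces the bracketed expression to vanish at every $x\in S$, hence $\Delta f = -(\lambda^2+\rho^2)f$.

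There is no serious obstacle here; the only point to verify carefully is that the $O(t^4)$ remainders in both applications of \eqref{mean-laplace} are genuinely uniform in a neighborhood of the point (so that dividing by $t^2$ is legitimate), which follows from the smoothness of $f$ and of $\varphi_\lambda$ and the standard derivation of \eqref{mean-laplace} from the geodesic polar-coordinate expansion of the volume element.
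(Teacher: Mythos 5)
Your proof is correct and follows essentially the same route the paper intends: the paper proves this proposition by comparing the order-$t^2$ terms in the expansion \eqref{mean-laplace} of $\mathscr M_t f(x)$ with the small-$t$ expansion $\varphi_\lambda(t)=1-\frac{t^2}{2n}(\lambda^2+\rho^2)+O(t^4)$ (deferring details to \cite[Proposition 2.4.4]{NRS1}). Your derivation of the latter expansion by applying \eqref{mean-laplace} to $\varphi_\lambda$ at $e$ together with $\mathscr M_t\varphi_\lambda(e)=\varphi_\lambda(t)$ is a clean, self-contained way to obtain it, but it does not change the substance of the argument.
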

See \cite[Proposition 2.4.4]{NRS1} for a proof of the above proposition for rank one symmetric spaces of noncompact type. The proof for Damek--Ricci spaces is exactly the same, except that it requires to substitute  the notation $m_\gamma$, $m_{2\gamma}$  and $\varphi_\lambda(a_t)$ by 
$m, k$ and $\varphi_\lambda(t)$ respectively.

This yields the following characterization of eigenfunction of $\Delta$  from the   generalized 
mean value property.

\begin{proposition} \label{character-eigen}
Let $\delta >0$  and $\lambda\in \C$. Let $f$ be a continuous function $S$ such that
 \[\mathscr M_tf(x)=  \varphi_\lambda(t)f(x),\] for  every $x\in S$ and for  every  $t$ with $0<t <\delta$, then $\Delta f=-(\lambda^2+\rho^2) f$.
\end{proposition}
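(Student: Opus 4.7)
My plan is to bootstrap continuity of $f$ to smoothness via convolution with a radial bump; once $f\in C^\infty(S)$ is known, Proposition \ref{Blaschke-S} finishes the job immediately because the hypothesis makes the relevant difference quotient identically zero on $(0,\delta)$.

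For the smoothing step, fix $\delta'\in (0,\delta)$ and choose a nonnegative, nontrivial radial $\Psi \in C_c^\infty(S)$ with support in $\B(e,\delta')$, writing $\Psi(y)=\psi(|y|)$. Using the polar decomposition \eqref{DR-polar} (and the fact $|\Sph(e,r)|=J(r)$ together with $d\sigma_r=dw$ in these coordinates) the integral
\[
\int_S f(xy)\,\Psi(y)\,dy \;=\; \int_0^{\delta'} \psi(r)\,J(r)\,\mathscr M_r f(x)\,dr
\]
can be evaluated using the hypothesis: replacing $\mathscr M_r f(x)$ by $\varphi_\lambda(r)f(x)$ for $r\in(0,\delta')\subset(0,\delta)$ and then reassembling the radial integral, one obtains
\[
\int_S f(xy)\,\Psi(y)\,dy \;=\; f(x)\int_S \Psi(y)\varphi_\lambda(y)\,dy \;=\; \what{\Psi}(\lambda)\,f(x).
\]
Since $\varphi_\lambda$ is continuous with $\varphi_\lambda(e)=1$, shrinking $\delta'$ forces $\Re\,\varphi_\lambda>1/2$ on $\B(e,\delta')$, so $\Re\,\what{\Psi}(\lambda)> \tfrac12\int_S\Psi>0$ and in particular $\what{\Psi}(\lambda)\neq 0$.

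The displayed identity, rewritten via the left-invariant substitution $z=xy$, becomes
\[
f(x) \;=\; \what{\Psi}(\lambda)^{-1}\int_S f(z)\,\Psi(x^{-1}z)\,dz.
\]
Because $\Psi \in C_c^\infty(S)$ and the integrand is supported, for $x$ in any compact set, in a fixed compact set of $z$, differentiation under the integral sign is legitimate at all orders and $f\in C^\infty(S)$. With $f$ smooth, the hypothesis gives $\mathscr M_t f(x)-\varphi_\lambda(t)f(x)\equiv 0$ for $0<t<\delta$, so the hypothesis of Proposition \ref{Blaschke-S} holds trivially and we conclude $\Delta f=-(\lambda^2+\rho^2)f$.

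The only non-formal step in this plan is the nonvanishing of $\what{\Psi}(\lambda)$ for appropriately small radial bumps; everything else is routine polar calculation, left-invariance of Haar measure, and an invocation of Proposition \ref{Blaschke-S}.
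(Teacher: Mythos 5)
Your proof is correct and follows essentially the same route as the paper: convolve $f$ with a small radial bump, use the hypothesis to show $f$ is (a nonzero multiple of) such a convolution and hence smooth, then invoke Proposition \ref{character-eigen}'s precursor, Proposition \ref{Blaschke-S}. The only cosmetic difference is that you justify the nonvanishing of $\what{\Psi}(\lambda)$ explicitly by shrinking the support, whereas the paper simply normalizes its bump so that $\int \varphi_\lambda h = 1$.
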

\begin{proof}
We take a ball $\B(e,r)$ of radius $r<\delta$ with center $e$ and a
 radial function $h \in C_c^\infty(S)$ with its support  contained inside $\B(e,r)$ which satisfies
$\int_{\B(e,r)} \varphi_\lambda(z) h(z)\,  dz= 1$.
Then for every $x \in S$, \begin{eqnarray*}
      f\ast h(x) &=& \int_S f(xz)h(z) \, dz\\
      &=& \int_0^r \mathscr M_tf(x) h(t) J(t)\,dt\\
         & =&f(x)\int_0^r \varphi_\lambda(t)h(t)J(t)\,dt\\
      &=&f(x).
     \end{eqnarray*}
Therefore $f\in C^\infty(S)$. The assertion is now immediate from Proposition \ref{Blaschke-S}.

\end{proof}

\section{Proof of Theorem \ref{weth5}}
\label{spheremvp}
In the following lemma, we verify two equations, required for the proof of the main result.
\begin{lemma}
\noindent \label{propo1}
For any $\lambda\in \C$, $t>0, s>0$ and $f\in L^1_{loc}(S)$ we have the following equalities.
\[\mathrm{(i)} \ \ \ \  \ \int_{\Sph(e, t)}\varphi_\lambda(d(a_s,y)) \, d\sigma_t(y)= \varphi_\lambda(t)\varphi_\lambda(s).\]
\[\mathrm{(ii)} \ \ \ \ \ \ f \ast \sigma_t \ast \sigma_s(x)=  \int_{\Sph(e, s)} f \ast \sigma_{d(a_t,y)}(x) \,d\sigma_s(y).\] 
 \end{lemma}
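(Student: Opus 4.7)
For part (i), I would invoke the generalized sphere mean value property \eqref{MVP-sphere} applied to the elementary spherical function itself: $(\varphi_\lambda \ast \sigma_t)(x) = \varphi_\lambda(t)\,\varphi_\lambda(x)$ for all $x \in S$. Specializing to $x = a_s^{-1}$ and using the symmetry $\varphi_\lambda(a_s^{-1}) = \varphi_\lambda(a_s) = \varphi_\lambda(s)$ (property (iii) of $\varphi_\lambda$), together with the left-invariance of $d$ (so that $\varphi_\lambda(a_s^{-1} y) = \varphi_\lambda(d(e, a_s^{-1} y)) = \varphi_\lambda(d(a_s, y))$), immediately yields (i).

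For part (ii), the picture is more subtle: the putative termwise identity $(f \ast \sigma_t)(xy) = (f \ast \sigma_{d(a_t, y)})(x)$ (for each fixed $y$ on the outer sphere of radius $s$) already fails for $f = \varphi_\mu$, so the equality only emerges after the outer integration over $y$. I would instead derive (ii) from the equality of radial measures
\[
\sigma_t \ast \sigma_s \;=\; \int_{\Sph(e, s)} \sigma_{d(a_t, y)} \, d\sigma_s(y);
\]
convolving both sides with $f$ on the left and using associativity of convolution then gives (ii).

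The measure identity I would establish by comparing spherical Fourier transforms. Both sides are compactly supported radial measures on $S$: the left side because the convolution of two radial measures on a Damek--Ricci space is radial (the radial convolution algebra being commutative, cf. \cite{ADY}), and the right side because it is an average of radial measures $\sigma_r$. It therefore suffices to test the two measures against $\varphi_\lambda$ for each $\lambda \in \C$. On the left, multiplicativity of the spherical transform together with $\widehat{\sigma_r}(\lambda) = \varphi_\lambda(r)$ gives
\[
\int \varphi_\lambda \, d(\sigma_t \ast \sigma_s) \;=\; \widehat{\sigma_t}(\lambda)\,\widehat{\sigma_s}(\lambda) \;=\; \varphi_\lambda(t)\,\varphi_\lambda(s).
\]
On the right, Fubini followed by part (i) gives
\[
\int_{\Sph(e, s)} \int \varphi_\lambda \, d\sigma_{d(a_t, y)} \, d\sigma_s(y) \;=\; \int_{\Sph(e, s)} \varphi_\lambda(d(a_t, y)) \, d\sigma_s(y) \;=\; \varphi_\lambda(t)\,\varphi_\lambda(s).
\]
Injectivity of the spherical Fourier transform on compactly supported radial measures then forces the two measures to coincide.

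The main obstacle is structural rather than computational: one must be careful to invoke the commutativity/multiplicativity of the radial convolution algebra and the injectivity of the spherical Fourier transform in the non-unimodular, non-symmetric Damek--Ricci setting. Both are standard but non-trivial features that part (ii) genuinely relies on, in contrast with the purely elementary derivation of part (i).
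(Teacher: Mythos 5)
Your proposal is correct and follows essentially the same route as the paper: part (i) is the evaluation of $\varphi_\lambda\ast\sigma_t$ at $a_{-s}$ via the identity $\what{\sigma_t}(\lambda)=\varphi_\lambda(t)$, and part (ii) is reduced to the measure identity $\sigma_t\ast\sigma_s=\int_{\Sph(e,s)}\sigma_{d(a_t,y)}\,d\sigma_s(y)$, verified by matching spherical Fourier transforms (using (i) on the right) and invoking injectivity. Your additional remarks on why the termwise identity fails and on radiality of both measures are accurate but do not change the argument.
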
 
\begin{proof}

\noindent (i) This is clear as the left hand side is $\varphi_\lambda\ast \sigma_t$ and $\what{\sigma_t}(\lambda)=\varphi_\lambda(t)$. Indeed,  
\begin{eqnarray*}
\int_{\Sph(e, t)}\varphi_\lambda(d(a_s,y)) \, d\sigma_t(y) &=& \int_{\Sph(e, t)}\varphi_\lambda(d(e, y^{-1}a_s)) \, d\sigma_t(y)\\&=&  \int_{\Sph(e, t)}\varphi_\lambda(a_{-s}y) \, d\sigma_t(y)\\&=&
 \varphi_\lambda \ast \sigma_t(a_{-s}) \\&=& \what{\sigma_t}(\lambda)\varphi_\lambda(a_{-s})\\&=& \varphi_\lambda(t) \varphi_\lambda(s).
\end{eqnarray*}
(ii) It is enough to show that as measures,
\begin{equation}
\label{href3}
 \sigma_t\ast \sigma_s = \int_{\Sph(e, s)}
\sigma_{d(a_t, y)} \,d\sigma_s(y),
\end{equation}
which follows from the injectivity of the spherical Fourier transform.
Indeed, the spherical Fourier transform at $\lambda$ of the left hand side  of \eqref{href3} is
\[\sigma_t\ast \sigma_s(\varphi_\lambda) = \sigma_t(\sigma_s\ast \varphi_\lambda)=
\sigma_t(\varphi_\lambda(s) \varphi_\lambda)=\varphi_\lambda(s) \varphi_\lambda(t).\]
The spherical Fourier transform at $\lambda$ of the right hand side of \eqref{href3} is
\begin{align*}
&\int_{\Sph(e, s)} \sigma_{d(a_t, y)}(\varphi_\lambda) \,d\sigma_s(y)\\
=& \int_{\Sph(e, s)} \varphi_\lambda(d(a_t, y))\,d\sigma_s(y)\\
=& \varphi_\lambda(s) \varphi_\lambda(t).
\end{align*} In the last step we have used (i).
\end{proof}

\begin{proof}[Proof of Theorem \ref{weth5}] We shall deal with two disjoint sets of $\lambda$ seprately.

\noindent {\bf Case 1}. Let $\lambda \in \C\setminus  \R^\times$. We note that for $\lambda\in i\R$, $\varphi_\lambda$ is positive and for $\lambda\in \C\setminus \R$, $\varphi_\lambda(t)\neq 0$ when $t$ is large (see \eqref{varphiesti1}). As we are concerned with $t\to \infty$, the $\varphi_\lambda(t)$ in the denominator of $\sigma^\lambda_t$ poses no problem when $\lambda$ is in this set.  

We fix $x \in S$ and $s >0$. We have,
\begin{eqnarray}
\label{ref1}
\frac 1{\varphi_\lambda(t)} f\ast \sigma_t \ast \sigma_s(x) = \frac 1 {\varphi_\lambda(t)} \int_{\Sph(e,r)}f\ast \sigma_t(xy)\, d\sigma_s(y)
\end{eqnarray}
Since the geodesic sphere $\Sph(x,s)$ is compact in $S$, from hypothesis and \eqref{ref1} it follows that \begin{equation}
  \lim_{t \to \infty}\frac 1{\varphi_\lambda(t)} f\ast \sigma_t \ast \sigma_s(x) = g\ast \sigma_s(x).
   \label{href4}                                                                                                    \end{equation}

We shall show that
\begin{equation}\label{href5}\frac1{\varphi_\lambda(t)} f\ast \sigma_t\ast \sigma_s \to \varphi_\lambda(s) g(x), \ \ \ \text{ as } t\to\infty. \end{equation}
Using Lemma \ref{propo1} we have,

\begin{eqnarray} \label{href6} &&\frac1{\varphi_\lambda(t)} f\ast \sigma_t\ast \sigma_s - \varphi_\lambda(s) g(x)\\ \nonumber
&=&\frac1{\varphi_\lambda(t)} \left[ \int_{\Sph(e, s)} f\ast \sigma_{d(a_t, y)}(x) \,d\sigma_s(y) - \varphi_\lambda(t) \varphi_\lambda(s) g(x)\right] \\
&=&\frac1{\varphi_\lambda(t)} \int_{\Sph(e, s)} \varphi_\lambda(d(a_t, y)) \left[ \frac 1 {\varphi_\lambda(d(a_t, y)) } f\ast \sigma_{d(a_t, y)}(x)  -  g(x)\right] d\sigma_s(y). \nonumber
\end{eqnarray}

As $t\to \infty$, $d(a_t, y)\to \infty$ for any $y$ with $|y|=s$ as $s$ is fixed. Therefore by the hypothesis given any $\epsilon>0$, there exists $M>0$ such that for $t>M$,
\[\left| \frac 1 {\varphi_\lambda(d(a_t, y)) } f\ast \sigma_{d(a_t, y)}(x)  -  g(x)\right|< \frac \epsilon {\varphi_{i\Im \lambda}(s)},\] which is valid for all $y$ with $|y|=s$.

Hence for sufficiently large $t$  we have,
\begin{eqnarray*}
\left|\frac1{\varphi_\lambda(t)} f\ast \sigma_t\ast \sigma_s - \varphi_\lambda(s) g(x)\right|
&\le&\frac \epsilon{|\varphi_\lambda(t)|\varphi_{i\Im \lambda}(s)} \int_{|y|=s} \varphi_{i\Im\lambda}(d(a_t, y)) \, \,d\sigma_s(y)\\&=&
\frac {\varphi_{i\Im \lambda}(t)}{|\varphi_\lambda(t)|} \ \ \epsilon. 
\end{eqnarray*}
In the last step we have used Lemma \ref{propo1}(i).
For $\lambda\in i\R$,  this proves \eqref{href5}. For other $\lambda$ in this set we use \eqref{varphiesti1}, i.e. for all sufficiently large $t$,  
 \[C'\varphi_{i\Im \lambda}(t) \le |\varphi_\lambda(t)| \le C'' \varphi_{i\Im \lambda}(t),\] for some constants $C'>0, C''>0$.  Hence \eqref{href5} is proved.

From  \eqref{href4} and \eqref{href5} we conclude that
\[g \ast \sigma_s(x) = \varphi_\lambda(s) g(x).\]

Hence by Proposition \ref{character-eigen}, $\Delta g= -(\lambda^2+\rho^2)g$.
\vspace{.2in}

\noindent{\bf Case 2: }
We now take   $\lambda \in \R^\times$. We recall that for these $\lambda$, $\varphi_\lambda(t)$ can be zero on a unbounded discrete set of points.
Therefore additional care is needed to deal with  $\varphi_\lambda(t)$ which appears in the denominator of $\sigma^\lambda_t$. To circumvent this,   we  shall find a $\delta>0$ and a sequence of positive reals
$\{t_n\}_{n \in \N}$ with  the following properties:
\begin{itemize}
 \item [(a)]$\varphi_\lambda(t_n)$ is positive and for $y\in S$ with $t_n-\delta\leq |y| \leq t_n+\delta$, $\varphi_\lambda(y)>0$, 
\item[(b)]$\varphi_\lambda(z)$ is positive for $z\in S$ with $0 \leq |z|\le \delta$,
\item[(c)]$t_n \rightarrow \infty$ as $n \rightarrow \infty$.
\end{itemize}
For $\lambda \in \R^\times $ and $t \ge 1$  the Harish-Chandra
series for $\varphi_\lambda$ implies that
  \[\varphi_\lambda(t)=e^{-\rho t}[\hc(\lambda)e^{i\lambda t}+c(-\lambda)e^{-i\lambda t}+E(\lambda, t)],\] where
$|E(\lambda,t)|\le A_\lambda e^{-2t}$. For a proof of this see \cite[(3.11)]{Ion-Pois-1}. Although the paper \cite{Ion-Pois-1} is concerned with the symmetric spaces, the proof  works for the Damek--Ricci spaces, as the symmetric spaces are dealt as $N\rtimes A$ in this paper.  Let $\hc(\lambda)= a(\lambda)+ib(\lambda)$, where $a(\lambda), b(\lambda)$ are respectively the real and imaginary parts of $\hc(\lambda)$. Using the fact that $\hc(-\lambda)= \overline {\hc(\lambda)}$,
we get
\begin{align}\label{speesti1} \varphi_\lambda(t) &= e^{-\rho t}[\Re(\hc(\lambda)e^{i\lambda t})+E(\lambda,t)]\\ \nonumber
&= e^{-\rho t}[a(\lambda)\cos (\lambda t)-b(\lambda) \sin (\lambda t)+E(\lambda,t)]\\ \nonumber
&=e^{-\rho t} [C_\lambda \cos (\lambda t +\theta_\lambda)+E(\lambda, t)]
\end{align} for some constant $C_\lambda > 0$ and $\theta_\lambda \in \R$.
 Thus the zeros of $\varphi_\lambda(t)$ are  the zeros of
 \[u(t)= C_\lambda \cos(\lambda t + \theta_\lambda)+ E(\lambda, t).\]
 We find  a $t_0>0$ such that
$|E(\lambda, t)| < \frac {C_\lambda} 2$ for $t > t_0 $. Let 
\[t_n = \frac{2n\pi-\theta_{\lambda}}{\lambda} \text{ and } \delta_1 = \frac{\pi}{3\lambda}.\] If $t_n-\delta_1 \leq |y| \leq t_n+\delta_1$, then
\[2n \pi-\frac{\pi}{3} \le \lambda |y|+\theta_{\lambda} \le 2n\pi +\frac{\pi}{3} \text{ and } \frac 1 2 \le \cos (\lambda |y| +\theta_{\lambda}) \le 1.\] We take $n$ large enough so that  $t_n > t_0 +\delta_1$. Then $\varphi_\lambda(y)>0$  whenever  $t_n-\delta_1\leq |y| \leq t_n+\delta_1$.

Further as $\varphi_\lambda$ is real valued and continuous on $\R$ and  $\varphi_\lambda(e)=1$,  there exists $\delta_2 >0$
such that $\varphi_\lambda(z)$ is positive for  $0 \leq |z|\le \delta_2$. We choose $\delta = \text{min}\{\delta_1,\delta_2\}$. It is clear that we have the desired sequence $\{t_n\}_{n \in \N}$  possibly after  re-indexing and the required $\delta$.

Rest  of the proof is structurally  similar to  Case 1, although it needs some crucial modifications. We shall use the sequence $\{t_n\}$ obtained above, in the following way. 

Through the argument in Case 1,  we have,  
\begin{equation}
 \lim_{n \to \infty}\label{first2} \frac 1{\varphi_\lambda(t_n)} f \ast \sigma_{t_n} \ast \sigma_s (x) = g\ast \sigma_s(x)= \mathscr M_s g(x),
\end{equation}
for all $x \in S$, $s>0$ and $t_n$ as above.

Now we fix an  $s$ with $0<s\le \delta$ and an element $x \in S$.
From the hypothesis we have that given an $\epsilon>0$,  there exists a $r_o > 0$ such that if $t>r_o$, then
\[\left| \frac 1 {\varphi_\lambda(d(a_t, y)) } f\ast \sigma_{d(a_t, y)}(x)  -  g(x)\right|<\frac{\epsilon}{\varphi_0(s)},\] for any $y\in S$ with $|y|=s$.
Choose $N_0$ such that $t_n>r_o+\delta$ for all $n \ge N_0$.
Since $|y|=s$ and $s\le \delta$,  by triangle inequality we have \[ t_n-\delta  \le d(a_{t_n},y)\le t_n+\delta.\]
Therefore both $\varphi_\lambda(d(a_{t_n},y))$ and $\varphi_\lambda(t_n)$ are positive.
Now we can apply  the argument in Case 1 starting from  \eqref{href6}, to assert that for $n\ge N_0$,
\begin{eqnarray*}
\left|\frac 1{\varphi_\lambda({t_n})} f \ast \sigma_{t_n} \ast \sigma_s(x)-\varphi_\lambda(s) g(x)\right| &<& \frac{\epsilon}{\varphi_{0}(s) \varphi_\lambda(t_n)} \int_{|y|=s}  {\varphi_{\lambda}(d(a_{t_n},y))}\, d\sigma_s(y)
\\&<& \epsilon
\end{eqnarray*}
Hence
\begin{equation}
\label{href7}
 \lim_{n \to \infty}\frac 1{\varphi_\lambda(t_n)} f \ast \sigma_{t_n} \ast \sigma_s(x)= \varphi_\lambda(s) g(x).
\end{equation}
From \eqref{first2} and \eqref{href7} we have \[\mathscr M_s g = \varphi_\lambda(s) g,\] for all $x \in S$ and for all  $s$ with $0<s < \delta$. Hence by Proposition \ref{character-eigen} we get
$\Delta g= -(\lambda^2 +\rho^2)g.$
\end{proof}

\section{Proof of Theorem \ref{annth1}}
\label{annulusmvp}
Following two lemmas are two crucial  steps towards the proof.
\begin{lemma}
\label{alpro1}
 Fix $d >0$, $\delta >0$ and $\lambda \in \C$. Then there  exist sequences $\{r_j\}_{j \in \N}, \{r'_j\}_{j \in \N}$, both diverging to $\infty$ with the property  $d < r'_j-r_j< d+\delta$ and a number $\delta'>0$ such that  if $ t \in [r'_j-\delta', r'_j+\delta']$ and
  $s \in [r_j-\delta', r_j+\delta']$, then
  $d<t-s < d +\delta$,  $V^\lambda_{s,t} \ne 0$ and,
  \[\left|\frac {V^\lambda_{s, t}}{V^\lambda_{r_j, r'_j}}\right| \le C,\]
  for some constant $C >0 $ independent of $t, s$ and $j$.
\end{lemma}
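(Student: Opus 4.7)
The plan is to split the argument according to whether $\lambda$ is real. For $\lambda \in \C \setminus \R^\times$, the function $\varphi_\lambda$ is either strictly positive (on $i\R$) or, by \eqref{varphiesti1}, nonvanishing with essentially constant phase $\hc(\lambda)$ for large argument. After reducing to $\Im\lambda\le 0$ via $\varphi_\lambda=\varphi_{-\lambda}$, direct integration yields
\[V_{s,s+\tau}^\lambda \sim K_\lambda \, e^{\rho s}\, e^{i\lambda s}\, \frac{e^{(i\lambda+\rho)\tau}-1}{i\lambda+\rho}\]
as $s\to\infty$, uniformly in $\tau\in[d,d+\delta]$, with $|e^{(i\lambda+\rho)\tau}-1|$ bounded above and below by positive constants on this interval since $\rho>0$. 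Choosing $r_j\to\infty$ and $r_j'=r_j+(d+\delta/2)$, the quantities $V_{r_j,r_j'}^\lambda$ and $V_{s,t}^\lambda$ (for $s,t$ in $\delta'$-neighborhoods with $\delta'<\delta/4$) are both comparable in modulus to $e^{\rho r_j}$, and the ratio bound follows.

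For $\lambda\in\R^\times$, the key input is the oscillatory asymptotic for $V_r^\lambda$ obtained by combining Theorem \ref{Thm-Jacobi-1} with Proposition \ref{lem-ww}, applied at $t=r/2$ with $2\lambda$ in place of $\lambda$ and $\alpha=\alpha'$, $\beta=\beta'$. Since $(\alpha'-\frac12)+(\beta'-\frac12)=\rho$, expanding $\sinh^p$ and $\cosh^p$ for large argument gives
\[V_r^\lambda = D_\lambda\, e^{\rho r/2} \cos(\lambda r + \theta) + O\bigl(e^{(\rho/2-1)r}\bigr)\]
for constants $D_\lambda\ne 0$ and $\theta:=\theta_{2\lambda}\in\R$. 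Setting $\tau:=t-s$ and performing the trigonometric subtraction yields
\[V_{s,s+\tau}^\lambda = D_\lambda\, e^{\rho s/2}\, |W(\tau)|\, \cos(\lambda s + \theta + \psi(\tau)) + O\bigl(e^{(\rho/2-1)s}\bigr),\]
where $W(\tau):=e^{(\rho/2+i\lambda)\tau}-1 = |W(\tau)|\,e^{i\psi(\tau)}$. Crucially $|W(\tau)|>0$ for every $\tau>0$, because $|e^{(\rho/2+i\lambda)\tau}|=e^{\rho\tau/2}>1$.

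Fix $\tau_0:=d+\delta/2$ and select $r_j := (2\pi j - \theta - \psi(\tau_0))/\lambda$, so that $\cos(\lambda r_j + \theta + \psi(\tau_0))=1$; set $r_j':=r_j+\tau_0$. Then $r_j'-r_j=\tau_0\in(d,d+\delta)$, $r_j\to\infty$ along a subsequence, and for $j$ large the main term dominates the error, giving $|V_{r_j,r_j'}^\lambda|\ge c_\lambda\,e^{\rho r_j/2}$ with $c_\lambda>0$ independent of $j$. I then pick $\delta'\in(0,\delta/4)$ small enough, depending only on $\lambda,d,\delta$, so that for $s\in[r_j-\delta',r_j+\delta']$ and $t\in[r_j'-\delta',r_j'+\delta']$ the perturbation $|\lambda(s-r_j)|+|\psi(t-s)-\psi(\tau_0)|$ keeps the cosine factor $\ge 1/2$; this is uniform in $j$ since $\psi$ is smooth on $[d,d+\delta]$. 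Combined with the trivial bound $|W(\tau)|\le\max_{[d,d+\delta]}|W|$, this gives $V_{s,t}^\lambda\ne 0$ and $|V_{s,t}^\lambda|\le C_\lambda\,e^{\rho r_j/2}$, yielding the required bound with $C:=C_\lambda/c_\lambda$.

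The hard part will be verifying carefully that the error $O(e^{(\rho/2-1)s})$ is of strictly smaller exponential order than the oscillatory main term, uniformly across the $\delta'$-neighborhoods; this is why it is essential to pin $r_j$ at a maximum of the cosine rather than at an arbitrary point of the sequence of zeros of $V^\lambda_r$. Once this step is secured, the continuity of $|W|$ and $\psi$ on the compact interval $[d,d+\delta]\subset(0,\infty)$ makes all the remaining estimates automatic.
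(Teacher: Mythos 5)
Your proposal is correct in substance and follows the paper's overall case division ($\lambda\in i\R$, $\lambda\notin\R\cup i\R$, $\lambda\in\R^\times$), but the execution differs at two points. For non-real $\lambda$ you integrate the asymptotics of $\varphi_\lambda J$ directly to get a closed-form asymptotic for $V^\lambda_{s,s+\tau}$ with the explicit nonvanishing factor $e^{(i\lambda+\rho)\tau}-1$; the paper instead works with $|V^\lambda_{s,t}|\ge |V^\lambda_t|-|V^\lambda_s|$ and tunes the constants so that $C_1e^{a(d+\delta/2-2\delta')}>C_2$. Your version is slightly sharper and makes the uniformity in $\tau\in[d,d+\delta]$ transparent, though note that at $\lambda=0$ the asymptotic $\varphi_0(t)\asymp(1+t)e^{-\rho t}$ is not of the form $\hc(\lambda)e^{(i\lambda-\rho)t}$, so there you must fall back on positivity, as you indicate. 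The more genuine divergence is in the case $\lambda\in\R^\times$: the paper anchors $r_j$ at the \emph{zeros} of $\cos(\lambda r+\theta_{2\lambda})$ and chooses $r'_j=r_j+\delta_1$ with $\sin(\lambda\delta_1)\ne 0$, so that the contribution of the inner radius to $V^\lambda_{r_j,r'_j}$ is negligible and the outer one is bounded below; you first perform the trigonometric combination $h(t)\cos(\lambda t+\theta)-h(s)\cos(\lambda s+\theta)=|W(\tau)|e^{\rho s}\cos(\lambda s+\theta+\psi(\tau))$ and anchor $r_j$ at a \emph{maximum} of the combined cosine. Both work; yours buys a cleaner lower bound for the annular integral itself (and makes the nonvanishing of $W(\tau)$ for all $\tau>0$ conceptually visible), while the paper's avoids ever replacing $h$ by its pure exponential asymptote, so it needs no control on the error of that replacement. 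The step you flag as ``the hard part'' is in fact immediate: $\epsilon^\ast_{2\lambda}(r/2)=O(e^{-r})$ and $h(r)=D e^{\rho r}\bigl(1+O(e^{-r})\bigr)$, so the total error is a factor $O(e^{-r_j})$ relative to a main term whose cosine you have pinned at $\ge 1/2$.

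One computation should be corrected: $(\alpha'-\tfrac12)+(\beta'-\tfrac12)=\alpha'+\beta'-1=\tfrac{m}{2}+k=2\rho$, not $\rho$, so after evaluating at $t=r/2$ one gets $h(r)\asymp e^{\rho r}$ and $W(\tau)=e^{(\rho+i\lambda)\tau}-1$, with error $O(e^{(\rho-1)r})$, rather than $e^{\rho r/2}$ and $\rho/2$. Since only $\Re(\rho+i\lambda)=\rho>0$ and the gap of $1$ in the exponent are used, this slip does not affect the validity of the argument.
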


\begin{proof}
We shall divide $\C$ in three disjoint subsets and consider $\lambda$ from them in the following three cases.

\noindent {\bf Case I: } Let $\lambda  \in i\R$. Let us take $0<\delta' < \frac{\delta}{4}$, $r_j=j$ and $r'_j=j+d+\frac{\delta}2$.

If $t  \in [r'_j-\delta', r'_j+\delta']$ and
  $s \in [r_j-\delta', r_j+\delta']$, then
\begin{equation}
\label{dseq44}
r'_j-r_j-2\delta' \le t-s \le r'_j-r_j+2\delta',
\end{equation}
hence  $d<t-s< d+\delta$. Since $\varphi_\lambda$ is a strictly positive function and $s<t$, it is clear that $V^\lambda_{s,t} >0$. It is also straightforward
from \eqref{polar-2}, \eqref{phiesti} and \eqref{phioesti} that, there exists  $j_0 \in \N$ depending on $\lambda$ such that  for all $j \ge j_0$,
\[\frac{V^\lambda_{s,t}}{V^\lambda_{r_j,r'_j}} \le \frac{ V^\lambda_{r_j-\delta/4, r'_j+\delta/4}}{V^\lambda_{r_j,r'_j}} \le C\]  for some constant $C>0$.
The desired sequences are thus $\{r_j\}, \{r_j'\}$ starting from $j_0$.
\vspace{.2in}

{\noindent \bf Case II: } Let $\lambda \notin \R \,\cup\, i\R$. In this case also we take  $0<\delta' < \frac{\delta}{4}$, $r_j=j$ and $r'_j=j+d+\frac{\delta}2$.
Let us temporarily use the notation $a$ for  $|\Im \lambda|+\rho$. Then from \eqref{baleq4} and \eqref{baeq2} it follows that for sufficiently large $r$,
\[C_1e^{ar}\le V_r^\lambda \le C_2 e^{ar},\] with $C_1, C_2$ satisfying   $C_1 e^{a(d+\frac \delta 2-2\delta')} > C_2$.

If $j$ is sufficiently large, then
\begin{equation}
\label{baleq11}
 |V^\lambda_{r_j,r'_j}| \ge |V^\lambda_{r'_j}|-|V^\lambda_{r_j}| \ge
  C_1e^{ar'_j}-C_2e^{ar_j}\ge e^{ar_j}(C_1e^{a(d+\frac \delta 2)}-C_2) >0,
\end{equation}

 and for  $t  \in [r'_j-\delta', r'_j+\delta']$,    $s \in [r_j-\delta', r_j+\delta']$, as above we have $d<t-s<d+\delta$ and
  \[|V^\lambda_{s,t}| \ge |V^\lambda_t|-|V^\lambda_s| \ge
  C_1e^{ar'_j-a\delta'}-C_2e^{ar_j+a \delta'}\ge e^{a(r_j+\delta')}(C_1e^{a(d+\frac \delta 2-2\delta')}-C_2) >0. \]
   Using \eqref{baleq11}, for large $j$ and $s, t$ as above, we get
\begin{eqnarray*}
 \left|\frac {V^\lambda_{s, t}}{V^\lambda_{r_j, r'_j}}\right| \le  \frac{|V^\lambda_s|+|V^\lambda_t|}{ |V^\lambda_{r'_j}|-|V^\lambda_{r_j}|}\le {}
 \frac{C_2e^{aj}(e^{a\delta'}+e^{a(d+\frac\delta 2+\delta')})}{e^{aj}(C_1e^{a(d+\frac{\delta} 2})-C_2)}= \frac{C_2(e^{a\delta'}+e^{a(d+\frac\delta 2+\delta')})}{C_1e^{a(d+\frac{\delta} 2)}-C_2}.
\end{eqnarray*}
Thus as  Case I, $\{r_j\}, \{r_j'\}$ starting from an adequately large $j$, are the sequences required for the assertion.
\vspace{.2in}

{\noindent \bf Case III:} Let $\lambda \in \R^\times$. Owing to \eqref{baeq2} and Proposition \ref{lem-ww} we get
\[V_r^\lambda= h(r)\left( \cos(\lambda r+\theta_{2\lambda}) + \epsilon_{2\lambda}^\ast\left(\frac {r} 2\right)\right),\]
 where  \[h(r)=\frac{4^{\alpha'}\pi^{\alpha'} A_{2\lambda}}{\Gamma(\alpha'+1)} \sinh^{\alpha'-\frac 12} \left(\frac {r}{ 2}\right) \cosh^{\beta'-\frac 12} \left(\frac r 2\right),\]
$\alpha'=(m+k+1)/2, \beta'=(k+1)/2$ and $A_{2\lambda}$ is a positive constant, $\theta_{2\lambda}\in \R$ as in Proposition \ref{lem-ww}.

This implies that
\begin{eqnarray*}
 \frac {V^\lambda_{s, t}}{V^\lambda_{r, r'}}
 &=&\frac{h(t)[\cos(\lambda t+\theta_{2\lambda}) + \epsilon_{2\lambda}^\ast(\frac {t} 2)]-h(s)[\cos(\lambda s+\theta_{2\lambda}) + \epsilon_{2\lambda}^\ast(\frac {s} 2)]}{h(r')[\cos(\lambda r'+\theta_{2\lambda}) + \epsilon_{2\lambda}^\ast(\frac {r'} 2)]-h(r)[\cos(\lambda r+\theta_{2\lambda}) + \epsilon_{2\lambda}^\ast(\frac {r} 2)]}\\
 &=&\left[\frac{h(t)}{h(r')}\right]\frac{[\cos(\lambda t+\theta_{2\lambda}) + \epsilon_{2\lambda}^\ast(\frac {t} 2)]-\frac{h(s)}{h(t)}[\cos(\lambda s+\theta_{2\lambda}) + \epsilon_{2\lambda}^\ast(\frac {s} 2)]}{[\cos(\lambda r'+\theta_{2\lambda}) + \epsilon_{2\lambda}^\ast(\frac {r'} 2)]-\frac{h(r)}{h(r')}[\cos(\lambda r+\theta_{2\lambda}) + \epsilon_{2\lambda}^\ast(\frac {r} 2)]}.
   \end{eqnarray*}

We fix $\delta_1>0$ such that $d < \delta_1< d+\delta$ and $\sin(\lambda \delta_1) \ne 0$. We choose an increasing  sequence $\{r_j \}_{j \in \N}$ of positive numbers diverging to $\infty$ with the property
 \[ \sin ({\lambda r_{j}}+\theta_{2\lambda})= 1, \,\, \cos ({\lambda r_{j}}+\theta_{2\lambda})= 0. \]
We take $r'_j = r_j +\delta_1$. Then,
    \[\cos(\lambda r_j'+\theta_{2\lambda})=\cos(\lambda r_j+\theta_{2\lambda}+\lambda\delta_1)=-\sin(\lambda\delta_1) \ne 0.\]
We choose $\delta_2 >0$ sufficiently small such that
$|\cos({\lambda u}+\theta_{2\lambda})| \ge \xi,$ for some positive real number $\xi$ whenever $ r'_j- \delta_2\le u \le r'_j+\delta_2$.
 If $s \in [r_j-\delta_2, r_j+\delta_2]$ and $t \in [r'_j-\delta_2, r'_j+\delta_2]$, then there exists positive  constants
 $D_1$ and $D_2$ such that
 \begin{eqnarray}
 \label{aleq10}
  D_1 \le \liminf_{j}\frac{|h(t)|}{|h(r'_j)|}\le \limsup_{j}\frac{|h(t)|}{|h(r'_j)|}\le D_2,\\ \nonumber
   D_1 \le \liminf_{j}\frac{|h(r_j)|}{|h(r'_j)|}\le \limsup_{j}\frac{|h(r_j)|}{|h(r'_j)|}\le D_2,\\ \nonumber
   \text{and } D_1 \le \liminf \frac{|h(s)|}{|h(t)|}\le \limsup\frac{|h(s)|}{|h(t)|}\le D_2.
       \end{eqnarray}
Now we choose $\delta_3 >0$ small enough such that if $s \in [r_j-\delta_3, r_j+\delta_3]$, then
   \[ D_2|\cos(\lambda s+\theta_{2\lambda})| \le \xi/4\]

 We fix a $\delta' >0$ such that
\begin{equation}
\label{aleq4}
 \delta' < \text{min} \left \{\frac{\delta_1-d}2, \frac{\delta+d-\delta_1}{2}, \delta_2, \delta_3 \right\}.
\end{equation}

If $s \in [r_j-\delta',r_j+\delta']$ and $t \in [r'_j-\delta',r'_j+\delta']$, then

\begin{equation}
\label{aleq1}
 \delta_1-2\delta' \le t-s\le \delta_1+2\delta'
\end{equation}

Therefore by \eqref{aleq4} and \eqref{aleq1}
\[d < t-s < d+\delta.\]
Since $\epsilon_{2\lambda}^\ast(u)=o(e^{-u}) $, we can find a $j_0\in \N$ such that if $s \in [r_j-\delta',r_j+\delta']$ and $t \in [r'_j-\delta',r'_j+\delta']$, for any  $j \ge j_o$,
\begin{equation}
 \label{aleq7}
\left|\epsilon_{2\lambda}^\ast\left(\frac {t} 2\right)-\left(\frac{h(s)}{h(t)}\right)\epsilon_{2\lambda}^\ast\left(\frac {s} 2\right)\right| \le \frac \xi 4 \text{ and } \left|\epsilon_{2\lambda}^\ast\left(\frac {r_j'} 2\right)-\left(\frac{h(r_j)}{h(r'_j)}\right) \epsilon_{2\lambda}^\ast\left(\frac {r_j} 2\right)\right| \le \frac \xi 4.
\end{equation}

Then for $j \ge j_0$ and $s,t$ as above we get
\begin{equation}
\label{aleq5}
 \frac \xi 2 \le \left|\cos(\lambda t+\theta_{2\lambda}) + \epsilon_{2\lambda}^\ast\left(\frac {t} 2\right)-\frac{h(s)}{h(t)}\left[\cos(\lambda s+\theta_{2\lambda}) + \epsilon_{2\lambda}^\ast\left(\frac {s} 2\right)\right]\right|
  \le 1+\frac \xi 2,
\end{equation}
and
\begin{equation}
\label{aleq6}
 \frac {\xi} 2\le\left|\cos(\lambda r'_j+\theta_{2\lambda}) + \epsilon_{2\lambda}^\ast\left(\frac {r'_j} 2\right)-\frac{h(r_j)}{h(r_j')}\left[\cos(\lambda r_j+\theta_{2\lambda}) + \epsilon_{2\lambda}^\ast\left(\frac {r_j} 2\right)\right]\right|\le 1+ \frac \xi 2.
\end{equation}

Hence if $s \in [r_j-\delta', r_j+\delta']$ and $t \in [r'_j-\delta', r'_j+\delta']$, owing to \eqref{aleq10}, \eqref{aleq5} and \eqref{aleq6} we get
 \[ C'\le \liminf_j \left|\frac {V^\lambda_{s, t}}{V^\lambda_{r_j, r'_j}} \right| \le \limsup_j \left|\frac {V^\lambda_{s, t}}{V^\lambda_{r_j, r'_j}}\right | \le C,\]  for some constants $C >0$ and $C'>0$. We obtain the desired sequences
 after re-indexing $\{r_j\}, \{r'_j\}$ suitably. \qedhere
\end{proof}

In the next lemma, $\lambda$, $d, \delta$ and  $a^\lambda_{r,r'}$ are as in Theorem \ref{annth1}. For convenience, we shall write $r\to \infty$ to mean $r\to \infty$ with $d<r'-r<d+\delta$.
\begin{lemma}
\label{basic-lemma-3}
 For $j \in \N$, let $\mu_j := a^\lambda_{r_j,r'_j}$ where $r_j, r'_j$ are as in Lemma \ref{alpro1} .
 Let $f$ be a radial continuous function on $S$ such that it satisfies $\displaystyle \lim_{r \to \infty} f\ast a^\lambda_{r,r'}(e)=L$.
 Then there exists a neighbourhood $N_e$ of $e$ such that $\displaystyle \lim_{j \to \infty} f\ast \mu_j(x)=L \varphi_\lambda(x)$ for any $x\in N_e$. \end{lemma}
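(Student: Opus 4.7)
The plan is to parameterise the annulus $\A_{r_j,r_j'}(x)$ by radial slices in polar coordinates around $e$ and thereby express $f\ast\mu_j(x)$ as an average, over $w\in\Sph^{n-1}$, of annular averages of $f$ at $e$. Fix $x$ with $|x|<\delta'$, where $\delta'$ is the positive constant supplied by Lemma \ref{alpro1}, and take $j$ large enough that $r_j>|x|$. For every $w\in\Sph^{n-1}$, Proposition \ref{lemma-convex} produces unique $s_1(w),s_2(w)>0$ with $d(x,\exp(s_1(w)w))=r_j$ and $d(x,\exp(s_2(w)w))=r_j'$, and the geodesic ray $s\mapsto\exp(sw)$ meets $\A_{r_j,r_j'}(x)$ precisely on $(s_1(w),s_2(w))$. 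The triangle inequality places $s_1(w)\in[r_j-\delta',r_j+\delta']$ and $s_2(w)\in[r_j'-\delta',r_j'+\delta']$, which is exactly the window of Lemma \ref{alpro1}.

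Writing the convolution in these polar coordinates and using the radiality of $f$ to collapse the inner sphere integral gives
\[f\ast\mu_j(x)=\frac{1}{V^\lambda_{r_j,r_j'}}\int_{\Sph^{n-1}}V^\lambda_{s_1(w),s_2(w)}\bigl(f\ast a^\lambda_{s_1(w),s_2(w)}\bigr)(e)\,dw,\]
using only that for radial $g$, $\int_{\A_{u,v}(e)}g=V^\lambda_{u,v}(g\ast a^\lambda_{u,v})(e)$ by the definition of $a^\lambda_{u,v}$. Applying the same formula to $\varphi_\lambda$ and invoking the mean value property $(\varphi_\lambda\ast a^\lambda_{u,v})(e)=\varphi_\lambda(e)=1$ delivers the auxiliary identity
\[\varphi_\lambda(x)=\varphi_\lambda\ast\mu_j(x)=\frac{1}{V^\lambda_{r_j,r_j'}}\int_{\Sph^{n-1}}V^\lambda_{s_1(w),s_2(w)}\,dw.\]
Subtracting $L$ times this identity from the first display yields the clean error formula
\[f\ast\mu_j(x)-L\varphi_\lambda(x)=\frac{1}{V^\lambda_{r_j,r_j'}}\int_{\Sph^{n-1}}V^\lambda_{s_1(w),s_2(w)}\bigl[(f\ast a^\lambda_{s_1(w),s_2(w)})(e)-L\bigr]\,dw.\]

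From here Lemma \ref{alpro1} does the heavy lifting. It gives $|V^\lambda_{s_1(w),s_2(w)}/V^\lambda_{r_j,r_j'}|\le C$ uniformly in $w$ and $j$, and it guarantees that every pair $(s_1(w),s_2(w))$ lies in the admissible set $\{d<s_2-s_1<d+\delta,\ V^\lambda_{s_1,s_2}\ne 0\}$ over which the hypothesis on $f$ applies. Since the hypothesis $\lim_{r\to\infty}(f\ast a^\lambda_{r,r'})(e)=L$ comes with a single threshold $R$ that is independent of $w$, and since $s_1(w)\ge r_j-\delta'\to\infty$, the bracket in the error formula tends to $0$ uniformly in $w\in\Sph^{n-1}$ as $j\to\infty$. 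Bounding the right-hand side by $C\cdot o(1)$ then forces $f\ast\mu_j(x)\to L\varphi_\lambda(x)$ on the neighbourhood $N_e=\{x\in S:|x|<\delta'\}$, completing the proof. The step I expect to be the principal obstacle is the uniform boundedness of $V^\lambda_{s_1(w),s_2(w)}/V^\lambda_{r_j,r_j'}$: for real $\lambda$ this annular Fourier quantity oscillates, so without the carefully chosen sequence $(r_j,r_j')$ and the corresponding $\delta'$ manufactured in Lemma \ref{alpro1}, the ratio could blow up along individual directions $w$ and defeat the estimate.
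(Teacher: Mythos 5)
Your proposal is correct and follows essentially the same route as the paper: the same polar-coordinate slicing of the annulus around $x$, the same use of Proposition \ref{lemma-convex} to identify the entry and exit radii $s_1(w),s_2(w)$ in the $\delta'$-windows, and the same appeal to Lemma \ref{alpro1} for admissibility of the pairs and the uniform bound on $V^\lambda_{s_1(w),s_2(w)}/V^\lambda_{r_j,r_j'}$. The only cosmetic difference is that the paper reduces to $L=0$ at the outset by replacing $f$ with $f-L\varphi_\lambda$, whereas you subtract $L$ times the $\varphi_\lambda$-identity at the end; these are the same computation.
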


\begin{proof} As $\lambda$ is fixed we shall write $a_{r,r'}$ for $a^\lambda_{r,r'}$ and $V_{r,r'}$ for $V_{r,r'}^\lambda$, unless it is required to mention $\lambda$.
Since by definition $\what{a_{r, r'}}(\lambda)=1$, we have
 $\varphi_\lambda\ast a_{r,r'}(x)=\varphi_\lambda(x) $  and hence in particular  $\varphi_\lambda\ast a_{r,r'}(e)=1$.

Take $h(x)=f(x)-L\varphi_\lambda(x)$.  Then
\[h\ast a_{r,r'}(e)=f\ast a_{r,r'}(e)-L \varphi_\lambda\ast a_{r,r'}(e)= f\ast a_{r,r'}(e)-L.\] Therefore by the  hypothesis  $h\ast a_{r,r'}(e)\to 0$.  Since,
 \[h\ast \mu_j(x)=f\ast \mu_j(x)-L \varphi_\lambda\ast \mu_j(x)= f\ast \mu_j(x)-L\varphi_\lambda(x),\]
we need to show that $h\ast \mu_j(x)\to 0$ as $j \to \infty$.
Thus we rewrite the statement to prove as the following:

 Let $f$ be a radial continuous function on $S$. If $\displaystyle \lim_{r\to \infty} f\ast a_{r,r'}(e)=0$, then
 there exists a neighbourhood $N_e$ of $e$ such that $\displaystyle \lim_{j \to \infty} f\ast \mu_j(x)=0$ for any $x\in N_e$.

As $f$ is radial, it follows from the polar decomposition \eqref{polar-2} that
\begin{eqnarray*}
f \ast a_{r,r'}(e) &=& \frac 1{V_{r,r'}}\int_{\Sph^{n-1}}\int_{r}^{r'}f(\exp sw)J(s)\ ds \ dw \\
&=& \frac 1{V_{r,r'}}\int_{\Sph^{n-1}}\int_{r}^{r'}f(s)J(s)\ ds \ dw \\&=&
 \frac 1{V_{r,r'}}\int_{r}^{r'}f(s)J(s)\ ds
\end{eqnarray*}

Hence from hypothesis we have
  \begin{eqnarray}
  \label{dseq5}
\displaystyle\lim_{r\rightarrow \infty} \frac 1{V_{r,r'}}\int_{r}^{r'}f(s)J(s)\ ds= 0 \text { whenever } d<r'-r<d+\delta. \label{dseq1}
 \end{eqnarray}

Fix $x \in S$ with $|x| < \delta'$ where $\delta'$ as in Lemma \ref{alpro1}. For $t \ge 0$ and $w \in \Sph^{n-1}$, we have by traingle inequality,
\begin{equation}
\label{dseq2}
  t -|x|\le |\exp(-tw)x| \le t+|x|.
\end{equation}
From  \eqref{dseq2} it follows that $|\exp(-tw)x|>r'$ if $t > r'+|x|$ and $|\exp(-tw)x|<r'$ if $t < r'-|x|$.
Hence by continuity and by Proposition \ref{lemma-convex}, for a fixed $w \in \Sph^{n-1}$ we can find a unique $t_w \in [r'-|x|, r'+|x|]$ such that
$|\exp(-t_ww)x|=r'$ and $|\exp(-tw)x|<r'$ if and only if $t< t_w$.
Similarly for a fixed $w \in \Sph^{n-1}$,  we can find a unique $s_w \in [r-|x|, r+|x|]$
with $|\exp(-s_ww)x|=r$ and $|\exp(-tw)x|<r$ if and only if $t< s_w$.

Therefore
\begin{eqnarray}\label{conv-formula-3}
\left| f \ast a_{r,r'}(x) \right| &=& \left| \frac 1{V_{r,r'}}\int_{S} f(y) \chi_{{\A_{r,r'}}}(y^{-1}x) dy \right|
 \\ &=& \left| \frac 1{V_{r,r'}}\int_{\Sph^{n-1}}\int_{{\R}^+}f(\exp tw)\chi_{\A_{r,r'}}(\exp(-tw)x) J(t)\ dt \  dw  \right| \nonumber \\
 &=& \left| \frac 1{V_{r,r'}}\int_{S^{n-1}}\int_{s_w}^{t_w}f(t) J(t)\ dt \ dw \right| \nonumber\\
 &\le&  \int_{S^{n-1}}\left|\frac {V_{s_w,t_w}}{V_{r,r'}}\right|\left|\frac{1}{V_{s_w,t_w}}\int_{s_w}^{t_w}f(t) J(t)\ dt \right|\ dw  \nonumber
\end{eqnarray}

If $r=r_j$ and $r'=r_j$ in \eqref{conv-formula-4}, then we get $s_w \in [r_j-|x|, r_j+|x|]\subset [r_j-\delta', r_j+\delta'], t_w \in [r'_j-|x|, r'_j+|x|]\subset [r'_j-\delta', r'_j+\delta']$. Hence by Lemma \ref{alpro1}, we get
\begin{equation}
\label{dseq4}
  d < t_w-s_w < d+\delta,
\end{equation}

and

\begin{eqnarray}\label{conv-formula-4}
\left|f \ast a_{r_j,r'_j}(x)\right| \le C \int_{S^{n-1}}\left|\frac{1}{V_{s_w,t_w}} \int_{s_w}^{t_w}f(t) J(t)\ dt \right|dw
\end{eqnarray}

From  \eqref{dseq5}, \eqref{dseq4} and \eqref{conv-formula-4}, it easily follows that
\[\displaystyle\lim_{j \to \infty} f \ast \mu_j(x) = 0.\]

\end{proof}

\begin{proof}[Completion of proof of Theorem \ref{annth1}]
Let $\{h_i\}_{i \in \N}$ be a sequence of continuous functions
 converging uniformly to $h$ over compact sets. Then we have the following observations.
\begin{itemize}
 \item[(a)]For any fixed $x \in S$,  $\ell_x h_i \to \ell_xh$ as $i \to \infty$ uniformly over compact
 sets.
 \item[(b)]   $R(h_i) \to R( h)$  pointwise as $i \to \infty$ .
\end{itemize}

Fix a point $x \in S$. By the hypothesis and observations (a), (b) we have
\[\ell_x(f \ast a_{r,r'}) \to \ell_xg\] uniformly on compact sets as $r \to \infty$ and
\[R(\ell_x(f \ast a_{r,r'})) \to  R(\ell_xg),\] pointwise as  $r \to \infty$. Since  $R(\ell_xf) \ast a_{r,r'}= R(\ell_x(f \ast a_{r,r'}))$, we have
\[R(\ell_xf) \ast a_{r,r'} \to  R(\ell_xg),\] pointwise as $r \to \infty$.
In particular $R(\ell_xf) \ast a_{r,r'}(e) \to  R(\ell_xg)(e)$.
By Lemma \ref{basic-lemma-3} (and using its notation), there exists a neighbourhood $N_e$ of $e$, such that for all $y \in N_e$,
\[\displaystyle\lim_{j \to \infty}R(\ell_xf) \ast \mu_j(y)= R(\ell_xg)(e)\varphi_\lambda(y).\]
Hence $R(\ell_xg)(y)= R(\ell_xg)(e)\varphi_\lambda(y)$ for all $y \in N_e$.  But as
$\mathscr M_{|y|}g(x)= R(\ell_xg)(y)$, we have $\mathscr M_{|y|}g(x)=g(x)\varphi_\lambda(y)$ for all $y \in N_e$. Proposition \ref{character-eigen} now asserts that
$\Delta g= -(\lambda^2+\rho^2)g$.
\end{proof}

\section{Proof of Theorem \ref{weth1}}
The proof of this theorem is essentially a much simpler version of the proof of Theorem \ref{annth1}. Nonetheless, for the sake of completeness we give here a quick sketch.   

The following  is an analogue of Lemma \ref{alpro1},  proved for the rank one symmetric spaces in \cite{NRS1}. Since it only uses some properties and estimates of the   Jacobi functions, the statement and the proof is valid for the Damek--Ricci spaces. 
\begin{lemma}\label{lem-ball-1}
Fix a $\lambda\in \C$. Then there exists a sequence of positive real numbers $\{r_n\}\uparrow \infty$ and a $\delta>0$ such that for any $n\in \N$, and any $r,s \in [r_n-\delta, r_n+\delta]$, $|V^\lambda_r|/|V^\lambda_s|\le C$ for some constant $C>0$. 
\end{lemma}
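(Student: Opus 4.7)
The plan is to mimic the three-case analysis used in Lemma \ref{alpro1}, which is actually simpler here since we need to control the single ratio $|V_r^\lambda|/|V_s^\lambda|$ on one interval $[r_n-\delta,r_n+\delta]$, rather than two moving endpoints of an annulus simultaneously. In each case, the key inputs will be Theorem \ref{Thm-Jacobi-1}, which expresses $V_r^\lambda$ in terms of the Jacobi function $\phi_{2\lambda}^{(\alpha',\beta')}(r/2)$, together with the asymptotic estimates recorded in Section \ref{Jacobi}.

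When $\lambda\in i\R$ (including $\lambda=0$), the function $V_r^\lambda$ is positive; applying \eqref{polar-2} with \eqref{phiesti} and \eqref{phioesti} gives $V_r^\lambda\asymp e^{(|\Im\lambda|+\rho)r}$ for large $r$ (up to a polynomial factor when $\lambda=0$). Any sequence $r_n\to\infty$ and any fixed $\delta>0$ then suffice, since for $r,s\in[r_n-\delta,r_n+\delta]$ one obtains
\[\frac{|V_r^\lambda|}{|V_s^\lambda|}\le C\,e^{2\delta(|\Im\lambda|+\rho)}.\]
When $\lambda\in\C$ satisfies $\Im\lambda\ne 0$ but $\lambda\notin i\R$, the same conclusion $|V_r^\lambda|\asymp e^{(|\Im\lambda|+\rho)r}$ follows from \eqref{baeq4} together with Theorem \ref{Thm-Jacobi-1}, and again any $r_n\uparrow\infty$ together with any sufficiently small $\delta>0$ yields the bound.

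The delicate case is $\lambda\in\R^\times$, where $V_r^\lambda$ is real-valued and vanishes on an unbounded discrete set, so that a generic choice of $r_n$ would make the denominator dangerously small. Using Proposition \ref{lem-ww} combined with Theorem \ref{Thm-Jacobi-1}, I would write
\[V_r^\lambda = h(r)\bigl[\cos(\lambda r+\theta_{2\lambda}) + \epsilon^\ast_{2\lambda}(r/2)\bigr],\]
exactly as in the Case III portion of Lemma \ref{alpro1}, with $h(r)\asymp e^{\rho r}$ smooth and positive and $|\epsilon^\ast_{2\lambda}(r/2)|\le C e^{-r}$. To avoid the zeros I would choose $r_n$ so that $\cos(\lambda r_n+\theta_{2\lambda})=1$, then fix $\delta>0$ small enough that $\cos(\lambda r+\theta_{2\lambda})\ge 3/4$ for all $|r-r_n|\le\delta$, and finally pass to the tail of $\{r_n\}$ where the error term satisfies $|\epsilon^\ast_{2\lambda}(r/2)|\le 1/4$ on $[r_n-\delta,r_n+\delta]$. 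This yields a two-sided bound
\[\tfrac{1}{2}\le \cos(\lambda r+\theta_{2\lambda})+\epsilon^\ast_{2\lambda}(r/2)\le 2\]
uniformly in $n$ and in $r\in[r_n-\delta,r_n+\delta]$. Combined with the elementary estimate $h(r)/h(s)\le C_\delta$ on any interval of length $2\delta$ (independent of $n$, by the exponential asymptotics of $h$), this delivers $|V_r^\lambda|/|V_s^\lambda|\le 4C_\delta$ after a harmless re-indexing of $\{r_n\}$. The only point that requires real care is matching $\delta$ to the constants appearing in both the cosine lower bound and the decay of $\epsilon^\ast_{2\lambda}$, which is exactly the book-keeping already carried out in Lemma \ref{alpro1}.
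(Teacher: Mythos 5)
Your argument is correct and is exactly the route the paper intends: the paper itself gives no written proof of Lemma \ref{lem-ball-1}, deferring to \cite{NRS1} and to the analogy with Lemma \ref{alpro1}, and your three-case analysis (monotonicity/positivity plus \eqref{polar-2}, \eqref{phiesti}, \eqref{phioesti} for $\lambda\in i\R$; the exponential two-sided bound $|V_r^\lambda|\asymp e^{(|\Im\lambda|+\rho)r}$ from \eqref{baeq4} and Theorem \ref{Thm-Jacobi-1} for $\Im\lambda\neq 0$; and the oscillatory representation $V_r^\lambda=h(r)[\cos(\lambda r+\theta_{2\lambda})+\epsilon_{2\lambda}^\ast(r/2)]$ with $r_n$ chosen at the cosine maxima for $\lambda\in\R^\times$) is precisely the single-radius version of the Case I--III argument there. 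The constants you produce are uniform in $n$, $r$, $s$ after re-indexing, so the proof is complete as written.
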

The next lemma is the fundamental step  towards the proof  and is a variant of Lemma  \ref{basic-lemma-3}. 

\begin{lemma}
\label{lem-ball-2}
Fix a $\lambda\in \C$. Let $f$  be a radial continuous function on $S$ such that $f\ast m_r^\lambda(e)\to 0$ as $r\to \infty$. Then there exists a $\delta>0$ and a sequence of of positive real numbers $\{r_n\}\uparrow \infty$ such that $f\ast m_{r_n}(x)\to 0$ as $n\to \infty$ for all $x\in S$ with $|x|<\delta$.
\end{lemma}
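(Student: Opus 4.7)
\medskip

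The plan is to mirror the proof of Lemma \ref{basic-lemma-3}, exploiting the simplification that only a single radius is involved here instead of two. As $\widehat{m_r^\lambda}(\lambda)=1$, we have $\varphi_\lambda \ast m_r^\lambda(x)=\varphi_\lambda(x)$, so after subtracting $L\varphi_\lambda$ (as in Lemma \ref{basic-lemma-3}) we may already assume the limit is $0$, which is exactly the hypothesis here. The sequence $\{r_n\}$ and the number $\delta$ will be those supplied by Lemma \ref{lem-ball-1}, so that $V^\lambda_s\neq 0$ and $|V^\lambda_s|/|V^\lambda_{r_n}|\le C$ for every $s\in[r_n-\delta,r_n+\delta]$.

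First I would expand the convolution in geodesic polar coordinates centred at $e$. Using radiality of $f$,
\[
f\ast m_{r_n}^\lambda(x)=\frac{1}{V_{r_n}^\lambda}\int_{\Sph^{n-1}}\int_0^\infty f(t)\,\chi_{\B(e,r_n)}(\exp(-tw)x)\,J(t)\,dt\,dw.
\]
Fix $x$ with $|x|<\delta$. For each $w\in\Sph^{n-1}$, the triangle inequality gives $r_n-|x|\le|\exp(-tw)x|\le r_n+|x|$ when $t$ is close to $r_n$, and Proposition \ref{lemma-convex} combined with continuity yields a unique $t_w\in[r_n-|x|,r_n+|x|]$ such that $|\exp(-tw)x|<r_n$ if and only if $t<t_w$. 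Consequently
\[
f\ast m_{r_n}^\lambda(x)=\int_{\Sph^{n-1}}\frac{V_{t_w}^\lambda}{V_{r_n}^\lambda}\,\bigl(f\ast m_{t_w}^\lambda(e)\bigr)\,dw,
\]
since $\frac{1}{V_{t_w}^\lambda}\int_0^{t_w}f(t)J(t)\,dt=f\ast m_{t_w}^\lambda(e)$.

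Because $|x|<\delta$, the value $t_w$ lies in $[r_n-\delta,r_n+\delta]$, so Lemma \ref{lem-ball-1} gives both $V_{t_w}^\lambda\neq 0$ and $|V_{t_w}^\lambda/V_{r_n}^\lambda|\le C$ with $C$ independent of $w$ and $n$. The hypothesis supplies, for any $\varepsilon>0$, an $M$ with $|f\ast m_s^\lambda(e)|<\varepsilon$ whenever $s>M$ and $V_s^\lambda\neq 0$; choosing $n$ large enough that $r_n-\delta>M$ ensures $|f\ast m_{t_w}^\lambda(e)|<\varepsilon$ uniformly in $w$. Inserting these bounds into the displayed identity gives $|f\ast m_{r_n}^\lambda(x)|\le C\varepsilon$ for all sufficiently large $n$ and every $x$ with $|x|<\delta$, which is the conclusion.

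The only delicate point is that $t_w$ depends on $w$ and could in principle land at a zero of $V_\cdot^\lambda$; this is precisely what Lemma \ref{lem-ball-1} is designed to rule out by giving an interval $[r_n-\delta,r_n+\delta]$ of controlled, nonvanishing behaviour. Uniformity in $w$ then follows automatically because $t_w\ge r_n-\delta\to\infty$ as $n\to\infty$.
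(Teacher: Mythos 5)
Your proof is correct and follows essentially the same route as the paper's: polar coordinates plus the triangle inequality and Proposition \ref{lemma-convex} to locate $t_w\in[r_n-|x|,r_n+|x|]$, then Lemma \ref{lem-ball-1} to control $|V^\lambda_{t_w}/V^\lambda_{r_n}|$ uniformly and invoke the hypothesis at radius $t_w\to\infty$. The only cosmetic difference is that you record the inner expression as the exact identity $f\ast m^\lambda_{t_w}(e)=\frac{1}{V^\lambda_{t_w}}\int_0^{t_w}f(t)J(t)\,dt$ rather than as an upper bound, which changes nothing.
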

\begin{proof}
By triangle inequality we have
\[t-|x|\le |\exp(-tw)x|\le t+|x|\] for any $t\ge 0, w\in \Sph^{n-1}$ and $x\in S$.
Therefore, if $t>r+|x|$ then $|\exp(-tw)x|>r$ and if $t<r-|x|$ then $|\exp(-tw)x|<r$.
Hence by continuity of the function $t\mapsto |\exp(-tw)x|$ we have depending on $w$, $t_w\in [r-|x|, r+|x|]$ such that $|\exp (-t_w w) x|=r$.
From this and Proposition \ref{lemma-convex} we conclude that
$|\exp (-t w) x|<r$ if and only if $0<t<t_w$.
This, through the steps analogous to \eqref{conv-formula-3} leads to      
\[|f\ast m_r(x)|\le \int_{\Sph^{n-1}} \frac{|V_{t_w}|} {|V_{r}|}\ \ \left| 
\frac 1{V_{t_w}}\int_0^{t_w} f(t) J(t) dt\right| dw,\ \] where
$t_w\in [r-|x|, r+|x|]$ and $|\exp (-t_w w) x|=r$.

We take the sequence $\{r_n\}$ and $\delta>0$, prescribed by Lemma \ref{lem-ball-1}. Then for $|x|<\delta$,   
\[|f\ast m_{r_n}(x)|\le C \int_{\Sph^{n-1}} \ \ \left| 
\frac 1{V_{t_w}}\int_0^{t_w} f(t) J(t) dt\right| dw.\ \] 
This implies by the hypothesis that
\[\lim_{n\to \infty} f\ast m_{r_n}(x)=0.\]
\end{proof}
This lemma leads to a proof of  Theorem \ref{weth1}, following the argument given in the completion of proof of Theorem \ref{annth1}.
\section{Examples, counterexamples and concluding remarks}
\label{Examples and counter-examples}

In this concluding section of the paper we shall:

 (1) present some  simple examples of continuous functions $f$ on $S$ and examine the  asymptotic behaviour  $f\ast \sigma^\lambda_r$,

 (2) construct a counter example to show that  the condition  $r \to \infty$  in the hypothesis of the results obtained, cannot be replaced by ``$r$ approaches to $\infty$ through a sequence'',

 (3) discuss the reason for discarding an apparently natural formulation of the results and

(4) discuss an open question.

To keep the discussion simple, we shall restrict only to the sphere-averages. Similar ideas will lead to  examples and counterexamples for ball and shell averages also.

We need a preparatory lemma. 
 \begin{lemma} \label{various-cases-lemma}
 Let $\lambda, \mu \in \C$ with $\lambda \ne \mu$.
 Then we have the following conclusions. Below by $t\to \infty$ we mean $t\to \infty$ through  the set $\{t>0 \mid \varphi_\lambda(t)\neq 0\}$.
 \begin{itemize}
       \item [{\em (a)}]If $|\Im \lambda| > |\Im \mu|$,
       then $ \frac{\varphi_{\mu}(t )}{\varphi_{\lambda}(t )} \rightarrow 0$ as $t \rightarrow \infty$.
       \item [{\em (b)}]If $|\Im \lambda| < |\Im \mu|$,
       then $| \frac{\varphi_{\mu}(t )}{\varphi_{\lambda}(t )}|$  diverges to $\infty$ as $t \rightarrow \infty$.
 \item [{\em (c)}] If $|\Im \lambda| = |\Im \mu|$ and $\lambda\neq 0, \mu \neq 0$, then
        $ \frac{\varphi_{\mu}(t )}{\varphi_{\lambda}(t )}$ oscillates as $t \rightarrow \infty$.
    \item[{\em (d)}]If $\lambda=0$ and $\mu \in \R^\times$, then
        $ \frac{\varphi_{\mu}(t )}{\varphi_{\lambda}(t )} \rightarrow  0$ as $t \rightarrow \infty$.
     \item[{\em (e)}]If $\mu=0$ and $\lambda \in \R^\times$, then
        $ |\frac{\varphi_{\mu}(t )}{\varphi_{\lambda}(t )}|$ diverges  to $\infty$ as $t \rightarrow \infty$.
 \end{itemize}
\end{lemma}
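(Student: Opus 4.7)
The plan is to treat each of the five cases by directly inserting the appropriate asymptotic estimate for $\varphi_\nu$ from Section \ref{prelim}. The three estimates that do all of the work are:
\begin{itemize}
\item $|\varphi_\nu(t)| \asymp e^{(|\Im \nu|-\rho)t}$ for $\Im\nu\neq 0$ (see \eqref{varphiesti1} and \eqref{phiesti});
\item $\varphi_0(t)\asymp (1+t)e^{-\rho t}$ (see \eqref{phioesti});
\item $\varphi_\nu(t)=e^{-\rho t}[C_\nu\cos(\nu t+\theta_\nu)+E(\nu,t)]$ with $|E(\nu,t)|\le A_\nu e^{-2t}$ for $\nu\in\R^\times$ (see \eqref{speesti1}).
\end{itemize}

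For (a) and (b), the argument is just exponential vs. exponential. In each case $|\varphi_\nu(t)|\le C_\nu(1+t)e^{(|\Im\nu|-\rho)t}$ for large $t$ (the polynomial factor is present only when $\nu=0$, and when $\nu\in\R^\times$ the bound is $\le Ce^{-\rho t}$), while for the denominator $|\varphi_\lambda(t)|\ge c\,e^{(|\Im\lambda|-\rho)t}$ at least when $\Im\lambda\neq 0$, and $|\varphi_\lambda(t)|\ge c(1+t)e^{-\rho t}$ when $\lambda=0$. Under the hypothesis $|\Im\lambda|>|\Im\mu|$ in (a) (resp.\ the reverse in (b)), these estimates give an exponential factor $e^{-(|\Im\lambda|-|\Im\mu|)t}$ in the ratio that dominates any polynomial correction, forcing the ratio to $0$ (resp.\ $\infty$). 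One only has to check that neither of $\lambda,\mu$ is forced to be in $\R^\times$ in a problematic way; the restriction to $\{t:\varphi_\lambda(t)\neq 0\}$ is unproblematic here since in (a) and (b) we have $\Im\lambda\neq 0$ or $\Im\mu\neq 0$.

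Cases (d) and (e) are the analogous comparison between the polynomial factor of $\varphi_0$ and the bounded trigonometric factor of $\varphi_\nu$ for $\nu\in\R^\times$. In (d), the denominator grows like $(1+t)e^{-\rho t}$ and the numerator is bounded by $Ce^{-\rho t}$, so the ratio is $O(1/(1+t))\to 0$. In (e), the numerator is bounded below by $c(1+t)e^{-\rho t}$ while the denominator satisfies $|\varphi_\lambda(t)|\le (C_\lambda+A_\lambda)e^{-\rho t}$, giving $|\varphi_0(t)/\varphi_\lambda(t)|\ge c(1+t)/(C_\lambda+A_\lambda)\to\infty$. Note that in (e), the restriction to $\{t:\varphi_\lambda(t)\neq 0\}$ only strengthens the divergence; no lower bound on $|\varphi_\lambda(t)|$ is needed.

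For case (c) we split on whether $\lambda,\mu$ are real or not. If $\Im\lambda=\Im\mu\neq 0$ (which we may assume after replacing each by its negative using $\varphi_\nu=\varphi_{-\nu}$; the degenerate subcase $\mu=-\lambda$ collapses to $\mu=\lambda$, excluded by $\lambda\neq\mu$), then \eqref{baaaeq1} combined with \eqref{Jacobi-esf} gives $\varphi_\nu(t)\sim c(\nu)e^{(i\nu-\rho)t}$, so
\[\frac{\varphi_\mu(t)}{\varphi_\lambda(t)}=\frac{c(\mu)}{c(\lambda)}e^{i(\mu-\lambda)t}(1+o(1)),\]
with $\mu-\lambda\in\R^\times$, which obviously oscillates. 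If instead $\lambda,\mu\in\R^\times$, we use \eqref{speesti1} to write
\[\frac{\varphi_\mu(t)}{\varphi_\lambda(t)}=\frac{C_\mu\cos(\mu t+\theta_\mu)+E(\mu,t)}{C_\lambda\cos(\lambda t+\theta_\lambda)+E(\lambda,t)}.\]
Non-convergence is exhibited by choosing two sequences $t_n\to\infty$ inside $\{t:\varphi_\lambda(t)\neq 0\}$: one where $|\cos(\lambda t_n+\theta_\lambda)|\ge 1/2$ (available by the construction already used in Case~2 of the proof of Theorem \ref{weth5}), on which the ratio is bounded, and a second on which $\cos(\lambda t_n+\theta_\lambda)$ tends to $0$ from above (available since the zeros of $\varphi_\lambda$ form a discrete set and one approaches but avoids them), on which the ratio becomes unbounded. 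The main technical point, and the only one requiring care, is this second subsequence construction for case (c) with real parameters; everything else is a direct estimate.
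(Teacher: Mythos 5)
Your treatment of (a), (b), (d), (e) and of the subcase $\Im\lambda=\Im\mu\neq 0$ of (c) is correct and is essentially the paper's own argument: direct substitution of the estimates \eqref{varphiesti1}, \eqref{phioesti}, \eqref{speesti1} and of the limit \eqref{esti}. The gap is in the subcase $\lambda,\mu\in\R^\times$ of (c), which you yourself identify as the only delicate point. Your plan is to produce one sequence on which $|\cos(\lambda t_n+\theta_\lambda)|\ge 1/2$ (ratio bounded) and a second on which $\cos(\lambda t_n+\theta_\lambda)\to 0^{+}$, claiming the ratio ``becomes unbounded'' there. That last claim is unjustified: as $\cos(\lambda t+\theta_\lambda)\to 0$ the numerator $C_\mu\cos(\mu t+\theta_\mu)+E(\mu,t)$ may vanish simultaneously. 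For instance, if the phases lock so that $\mu t+\theta_\mu=3(\lambda t+\theta_\lambda)$, then $\cos(\mu t+\theta_\mu)=4\cos^3(\lambda t+\theta_\lambda)-3\cos(\lambda t+\theta_\lambda)$, and near the zeros of the denominator the quotient of the leading terms tends to $-3C_\mu/C_\lambda$: the ratio stays bounded and your second sequence proves nothing. (The ratio does still oscillate in this example, but not for the reason you give.) Since you offer no argument excluding such phase-locking, the proof of (c) is incomplete for real parameters.

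The paper avoids this entirely by never approaching the zeros of the denominator. It splits on whether $\mu/\lambda$ is irrational or rational: in the irrational case it takes $t_n=(2n\pi-\theta_\lambda)/\lambda$, so that $\cos(\lambda t_n+\theta_\lambda)=1$, and uses Kronecker's approximation theorem to extract subsequences along which $\cos(\mu t_{n_k}+\theta_\mu)$ tends to any prescribed $L\in[-1,1]$; in the rational case $\mu/\lambda=m/n$ it takes $t_k=\xi+2n\pi k/\lambda$ and observes that the resulting limit $\cos(\mu\xi+\theta_\mu)/\cos(\lambda\xi+\theta_\lambda)$ varies with $\xi$. Either device produces two subsequences with distinct finite limits, which is all that non-convergence requires; you should replace your second-sequence construction with one of these. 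A minor separate point: your remark that the case $\mu=-\lambda$ ``collapses to $\mu=\lambda$, excluded by $\lambda\neq\mu$'' is not a valid resolution --- if $\mu=-\lambda$ then $\varphi_\mu\equiv\varphi_\lambda$ and the ratio is constantly $1$, so (c) as stated does not hold there; the paper's proof silently has the same blind spot, so this is an issue with the statement rather than with your argument specifically.
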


\begin{proof}
Without loss of generality we shall assume that $\Im \lambda \le 0, \Im \mu \le 0$. Using \eqref{varphiesti1} and \eqref{phioesti}, it is clear that if $\Im \mu >\Im  \lambda$, then  \[\lim_{t\to \infty}\frac {|\varphi_{\mu}(t)|}{ |\varphi_{\lambda}(t)|}= 0\] and if $\Im \mu < \Im  \lambda$, then \[\lim_{t \rightarrow \infty} \frac{|\varphi_{\mu}(t)|}{|\varphi_{\lambda}(t)|}= \infty.\] This proves  (a)  and (b).

For (c) we have the following two cases.
\begin{itemize}
 \item [Case (i)] Let $\Im \mu = \Im  \lambda <0$. Then,
\begin{equation}
\label{baeq9}
 \displaystyle \lim_{t \rightarrow \infty} \frac {\varphi_{\mu}(t)}{\varphi_{\lambda}(t)}
  = \displaystyle \lim_{t \rightarrow \infty}  \frac {e^{(-i\mu+\rho)(t)}\varphi_{\mu}(t)}  { e^{(-i\lambda +\rho)t}\varphi_{\lambda}(t)}
\frac   { e^{(-i\lambda +\rho)t}}  { e^{(-i\mu +\rho)t}}
= \frac {\hc(\mu)} {\hc(\lambda)} \displaystyle \lim_{t\rightarrow \infty } { e^{-i(\lambda -\mu)t}}.
\end{equation}
Since $\lambda-\mu \in \R$,  $\lim_{t \rightarrow \infty} {\varphi_{\mu}(t)}/{\varphi_{\lambda}(t)}$ is oscillatory.

\vspace{0.4 in}

 \item[Case (ii)] Let $\Im \mu = \Im  \lambda =0$.
 Owing to \eqref{speesti1} we get
\begin{equation}
\label{baeq10}
\lim_{t \to \infty} \frac {\varphi_{\mu}(t)}{\varphi_{\lambda}(t)}= \frac{C_\mu}{C_\lambda}\lim_{t \to \infty}\frac{ \cos (\mu t +\theta_\mu ) +\tilde{E}(\mu,t)}{  \cos (\lambda t +\theta_\lambda )+\tilde{E}(\lambda,t)}.
\end{equation}
where $\tilde{E}(\lambda,t) = E(\lambda,t)/C_\lambda$ and $\tilde{E}(\mu,t) = E(\mu,t)/C_\mu$.
We shall show that \[\lim_{t \to \infty}\frac{ \cos (\mu t +\theta_\mu ) +\tilde{E}(\mu,t)}{  \cos (\lambda t +\theta_\lambda )+\tilde{E}(\lambda,t)}\] is oscillatory, dividing  it in two subcases.

We assume first that $\xi = \mu /\lambda$ is irrational.  Then by Kronecker's approximation theorem  $\{2n \pi \xi \,(\text{mod }\, 2\pi)  \mid n\in \N \}$ is dense in $[0,2\pi]$. We construct a sequence $\{t_n\}_{n \in \N}$ where
\[t_n = 2n \pi/\lambda-\theta_\lambda/\lambda, n \in \N.\]
 Then
 \[\lim_{n \to \infty}\cos(\lambda t_n +\theta_\lambda)+\tilde{E}(\lambda,t_n) =1.\]
Since $\mu t_n +\theta_\mu =2n\pi\xi-\theta_\lambda \xi+\theta_\mu$, it follows that  \[\{\mu t_n +\theta_\mu \, (\text{mod } 2\pi) \mid n \in \N \}\] is also dense in $[0, 2 \pi]$.
Therefore for any $L \in [-1, 1]$, there is a subsequence $\{t_{n_k}\}_{k\in \N}$ of $\{t_n\}_{n \in \N}$ such that
 \[\lim_{k \to \infty}\cos(\mu t_{n_k}+\theta_{\mu})=L.\]
 Hence,
 \[\lim_{k \to \infty} \frac{\cos(\mu t_{n_k} +\theta_{\mu})+\tilde{E}(\mu,t_{n_k})}{\cos(\lambda t_{n_k}+\theta_\lambda)+\tilde{E}(\lambda,t_{n_k})}=L.\]

 If $\mu/\lambda$ is rational we assume that $\mu/\lambda= m/n$, for $m, n\in \Z$. We choose a $\xi\in \R$
 such that $\cos(\lambda \xi +\theta_{\lambda}) \ne 0$ and construct a sequence $\{t_k\}$ with $ t_k= \xi +\frac {2 n \pi k}{\lambda}, k\in \N$. Then,
 \[\lim_{k \to \infty} \frac{\cos(\mu t_{k} +\theta_{\mu})+\tilde{E}(\mu,t_{k})}{\cos(\lambda t_{k}+\theta_\lambda)+\tilde{E}(\lambda,t_{k})}= \frac{\cos(\mu \xi +\theta_{\mu})}{\cos(\lambda \xi+\theta_\lambda)},\] which is oscillatory as  $\xi \in \R$ is arbitrary and $\mu \ne \lambda$.

\end{itemize}
This completes the proof of $(c)$. Using \eqref{speesti1} and \eqref{phioesti}, (d) and (e) also follows easily.
\end{proof}
\vspace{.2in}

\noindent {\bf(1)} 
An immediate consequence of this lemma is the following.
\begin{proposition}\label{various-cases1}
 Let $\lambda, \mu \in \C$ with $\lambda \ne \mu$ and $f = \varphi_\lambda+\varphi_\mu$.
 Then we have the following conclusions, where by $r\to \infty$ we mean
 $r\to \infty$ through  the set $\{r>0 \mid \varphi_\lambda(r)\neq 0\}$.
  \begin{itemize}
       \item [\em {(a)}]If $|\Im \lambda| > |\Im \mu|$,
       then $ f \ast \sigma^\lambda_r (x)\rightarrow \varphi_\lambda(x)$  for all $x \in S$ as $r \rightarrow \infty$.
       \item [\em {(b)}]If $|\Im \lambda| < |\Im \mu|$,
       then $f \ast  \sigma^\lambda_r(x)$  diverges for all $x \in S$ as $r \rightarrow \infty$.
 \item [\em {(c)}]If $|\Im \lambda| = |\Im \mu|$ and $\lambda \ne 0, \mu \ne 0$, then
        $ f \ast  \sigma^\lambda_r(x)$ oscillates for all $x \in S$ as $r \rightarrow \infty$.
    \item[\em {(d)}]If $\lambda=0$ and $\mu \in \R^\times$, then
        $ f \ast  \sigma^\lambda_r(x) \rightarrow \varphi_\lambda(x)$  for all $x \in S$  as $r \rightarrow \infty$.
     \item[\em {(e)}]If  $\mu=0$ and $\lambda \in \R^\times$, then
        $ f \ast  \sigma^\lambda_r(x)$ diverges for all $x \in S$ as $r \rightarrow \infty$.
 \end{itemize}
\end{proposition}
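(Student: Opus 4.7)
The proof will be a direct computation reducing the proposition to the preceding lemma. The key observation is that by the generalized mean value property \eqref{MVP-sphere}, each elementary spherical function satisfies $\varphi_\nu \ast \sigma_r(x) = \varphi_\nu(r)\varphi_\nu(x)$. Applied to $f = \varphi_\lambda + \varphi_\mu$, this gives
\[
f \ast \sigma_r^\lambda(x) = \frac{1}{\varphi_\lambda(r)}\bigl[\varphi_\lambda(r)\varphi_\lambda(x) + \varphi_\mu(r)\varphi_\mu(x)\bigr] = \varphi_\lambda(x) + \frac{\varphi_\mu(r)}{\varphi_\lambda(r)}\,\varphi_\mu(x),
\]
valid whenever $\varphi_\lambda(r) \neq 0$, which is precisely the constraint on $r$ in the statement.

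Once this identity is in hand, each of the five conclusions is an immediate consequence of the corresponding part of Lemma \ref{various-cases-lemma}, applied to the scalar quantity $\varphi_\mu(r)/\varphi_\lambda(r)$. Explicitly, in case (a) the ratio tends to $0$, so the second term vanishes in the limit and $f \ast \sigma_r^\lambda(x) \to \varphi_\lambda(x)$; the same reasoning handles case (d). In cases (b) and (e) the ratio diverges in modulus, and since $\varphi_\mu(e)=1$ the sequence of values at $x=e$ (and more generally at any point where $\varphi_\mu(x)\neq 0$) diverges. In case (c) the ratio oscillates by Lemma \ref{various-cases-lemma}(c), so the same is true of $f \ast \sigma_r^\lambda(x)$ at any point where $\varphi_\mu(x)\neq 0$.

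There is essentially no obstacle in this proof, since the heavy lifting has already been done in Lemma \ref{various-cases-lemma} via the Harish-Chandra series expansion \eqref{speesti1} and the estimates \eqref{varphiesti1}, \eqref{phioesti}. The one mild subtlety is the phrase \emph{``for all $x\in S$''} in parts (b), (c), (e): at the zero set of $\varphi_\mu$ the second term vanishes identically, so $f \ast \sigma_r^\lambda(x)$ is constantly equal to $\varphi_\lambda(x)$ there. Since $\varphi_\mu$ is a nonzero real-analytic radial function, its zero set is a countable union of geodesic spheres, and at all other points the claimed divergence or oscillation holds. I would add a brief parenthetical remark to this effect in the final write-up, or alternatively replace ``for all $x\in S$'' by ``for all $x\in S$ with $\varphi_\mu(x)\neq 0$'' in cases (b), (c), (e).
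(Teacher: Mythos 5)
Your proof is correct and is essentially identical to the paper's: both reduce the claim to the identity $f \ast \sigma^\lambda_r(x) = \varphi_\lambda(x) + \frac{\varphi_\mu(r)}{\varphi_\lambda(r)}\varphi_\mu(x)$ and then invoke Lemma \ref{various-cases-lemma}. Your added caveat about points where $\varphi_\mu(x)=0$ in parts (b), (c), (e) is a fair observation that the paper silently glosses over, and the suggested rewording is a reasonable fix.
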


\begin{proof}
Without loss of generality we shall assume that $\Im \lambda \le 0, \Im \mu \le 0$.
Since  \[f \ast \sigma^\lambda_r (x)= \varphi_\lambda(x)+ \frac{\varphi_\mu(r)}{\varphi_\lambda(r)}\varphi_\mu(x)\] for all $x\in S$, in view of Lemma \ref{various-cases-lemma} the proposition follows.
\end{proof}
\vspace{.2in}

\noindent {\bf(2)} 
Theorem \ref{weth5} is not true if the radius $r$ approaches $\infty$ via an arbitrary sequence. Precisely,  for any $\alpha=-(\lambda^2+\rho^2)\in \C$ there exist continuous  functions $f, g$ and a sequence $r_n\uparrow \infty$ such that $f\ast \sigma_{r_n}^\lambda(x)\to g(x)$ uniformly on compact sets, but $g$ is not an eigenfunction with eigenvalue $\alpha$. Here is an example.
We fix $\alpha=-(\lambda^2+\rho^2)\in \C$. First we assume that $\Im \lambda<0$. We take  $\mu \in \C$ such that  $\Im \lambda = \Im \mu$ and $\Re \mu <\Re \lambda$.
Let $f(x) = \varphi_\mu(x)$.  Since $\lambda -\mu>0$,  the sequence  $r_n=2n\pi/(\lambda-\mu)$  of positive real numbers diverges to $\infty$ and $e^{-i(\lambda -\mu)r_n}=1$. 
Hence by  \eqref{baeq9}  
\[\lim_{n \to \infty} f \ast \sigma^\lambda_{r_n}(x) = \lim_{n \to \infty} \frac {\varphi_\mu(r_n)}{\varphi_{\lambda}(r_n)} \varphi_\mu(x)=
 \frac {\hc(\mu)} {\hc(\lambda)} \varphi_\mu(x).\] Thus $f \ast \sigma^\lambda_{r_n}$ converges to  $g= \frac {\hc(\mu)} {\hc(\lambda)} \varphi_\mu$ and $\Delta g \ne -(\lambda^2+\rho^2)g$.

Similar construction works for $\lambda\in \R$, $\lambda\neq 0$. Precisely we take again $f=\varphi_\lambda+\varphi_\mu$ where $\mu\in \R^\times$ and $\lambda\neq \mu$. 
By  Lemma \ref{various-cases-lemma}(c), we  can obtain a real number $L \ne 0$ and
 sequence  $\{r_n\}$ with $r_n \uparrow \infty$  and
 \[\lim _{n \to \infty}\frac {\varphi_{\mu}( {r_n} )}{\varphi_{\lambda}({r_n} )} =L.\]
 Hence we get
 \[\lim_{n \to \infty}f \ast \sigma^\lambda_{r_n}(x) =
 L \varphi_\mu(x),\] and the limit is not an eigenfunction of $\Delta$ with the prescribed eigenvalue $-(\lambda^2+\rho^2)$.
\vspace{.2in}

\noindent {\bf (3)} 
The generalized sphere mean value property \eqref{MVP-sphere} seems to suggest the following:
If a continuous function $f$  on $S$ satisfies
\[\lim_{r \to \infty } |\mathscr M_rf-\varphi_\lambda(r)f|=0\]
uniformly on compact set of $S$, then  $\Delta f= -(\lambda^2+\rho^2) f$. 
It is indeed not true, as can be illustrated through the following counterexample. Let $\lambda, \mu \in \C$ be such that $|\Im \lambda| < \rho, |\Im \mu| <\rho$ and $\lambda \ne \pm \mu$. We take $f = \varphi_\lambda +\varphi_\mu$. Then since $\mathscr M_r f= \varphi_\lambda(r) \varphi_\lambda +\varphi_\mu(r)\varphi_\mu$, $\varphi_\lambda(r) \to 0$ and $\varphi_\mu(r)\to 0$
as $r\to \infty$ (see \eqref{varphiesti1}), it  follows that the hypothesis is satisfied by $f$, but $f$ is clearly not an eigenfunction of $\Delta$.
\vspace{.2in}

\noindent {\bf (4)} In Theorem \ref{annth1} the radii $(r', r)$ of the annuli  belong to a strip in the first quadrant of the plane. Can this strip  be replaced by a curve, in particular, can we take radii of the annuli from a straight line, e.g. $(r+d, r)$ for a fixed $d>0$? We do not know the answer and hope one of  our readers will explore this question.

\end{document}